\newcommand{\R}{\mathbb{R}}
\newcommand{\N}{\mathbb{N}}
\newcommand{\I}{\mathbb{I}}
\newcommand{\B}{\mathcal{B}}
\newcommand{\F}{\mathcal{F}}
\newcommand{\G}{\mathcal{G}}
\newcommand{\Eb}{\mathbb{E}}
\newcommand{\lbetar}{\lfloor \beta \rfloor}
\DeclareMathOperator{\var}{var}
\DeclareMathOperator{\bow}{Bow}
\DeclareMathOperator*{\interior}{int}
\newtheorem{thm}{Theorem}[section]
\newtheorem{cor}[thm]{Corollary}
\newtheorem{lem}[thm]{Lemma}
\newtheorem{prop}[thm]{Proposition}
\newtheorem{defn}{Definition}[section]
\newtheorem{rem}[equation]{Remark}
\begin{document}

\title{Equilibrium States for the Random $\beta$-Transformation through $g$-Measures}
\author{Karma Dajani and Kieran Power}
\address[Karma Dajani]{Department of Mathematics, Utrecht University, P.O.~Box 80010, 3508TA Utrecht, the Netherlands,}
\email[Karma Dajani]{k.dajani1@uu.nl}
\address[Kieran Power]{Department of Mathematics, Utrecht University, P.O.~Box 80010, 3508TA Utrecht, the Netherlands}
\address[Kieran Power]
{School of Mathematics, Statistics and Actuarial Science, University of Kent, Canterbury Kent CT2 7FS, United Kingdom}
\email[Kieran Power]{kcp6@kent.ac.uk}



\subjclass[2010]{37E05, 28D05, 37E15, 37A45, 37A05}

\maketitle

\begin{abstract}
We consider the random $\beta$-transformation $K_{\beta}$, defined on $\{0,1\}^{\N}\times[0, \frac{\lfloor\beta\rfloor}{\beta-1}]$, that generates all possible expansions of the form $x=\sum_{i=0}^{\infty}\frac{a_i}{\beta^i}$, where 
$a_i\in \{0,1,\cdots,\lfloor\beta\rfloor\}$. This transformation was introduced in \cite{KarmadeVries, KarmaInv, KarmaKraaikampRandom}, where two natural invariant ergodic measures were found. The first is the unique measure of maximal entropy, and the second is a measure of the form $m_p\times \mu_{\beta}$, with $m_p$ the Bernoulli $(p,1-p)$ product measure and $\mu_{\beta}$ is a measure equivalent to Lebesgue measure. In this paper, we give an uncountable family of $K_{\beta}$-invariant exact $g$-measures for a certain collection of algebraic $\beta$'s.
\end{abstract}

\section{Introduction}\label{intro}
Let $\beta >1$ be a non-integer and let $I_{\beta} = \big[0, \frac{\lbetar}{\beta -1} \big]$. It is a well known fact that Lebesgue almost all $x \in I_\beta$ have uncountably many different $\beta$-expansions (see \cite{Sidorov}). These expansions can all be generated by iterating a certain dynamical system, called the random $\beta$-transformation. Define the maps $\{ T_k \}_{0 \le k \le \lbetar}$ by $T_k (x) = \beta x -k$. These maps together partition the interval $I_{\beta}$ in a natural way: 
\begin{align*}
E_0 &= \Big[ 0, \frac{1}{\beta} \Big), \quad E_{\lbetar} = \Big( \frac{\lbetar}{\beta(\beta-1)} + \frac{\lbetar -1}{\beta}, \frac{\lbetar}{\beta-1} \Big],\\
E_k &= \Big( \frac{\lbetar}{\beta(\beta-1)} + \frac{k-1}{\beta}, \frac{k+1}{\beta} \Big), \quad 1 \le k \le \lbetar-1,\\ 
S_k &= \Big[ \frac{k}{\beta}, \frac{\lbetar}{\beta(\beta-1)} + \frac{k-1}{\beta} \Big], \quad 1 \le k \le \lbetar.
 \end{align*}
On $\{0,1\}^{\mathbb N} \times I_{\beta}$ we define the {\em random $\beta$-transformation} $K_{\beta}$ by
\[ K_{\beta} (\omega, x) = \left\{
\begin{array}{ll}
\big(\omega, T_k (x)\big), & \text{if } x \in E_k, \, 0 \le k \le \lbetar,\\
\\
\big(\sigma (\omega), T_{k-1+\omega_1}(x) \big), & \text{if } x \in S_k, \, 1 \le k \le \lbetar,
\end{array}\right.\]
where $\sigma$ denotes the left shift on sequences, i.e., $\sigma (\omega_n)_{n \ge 1} = (\omega_{n+1})_{n \ge 1}$. 
\begin{figure}[ht]
\begin{tikzpicture}[scale=4.5]
\filldraw[fill=yellow!20, draw=yellow!20](.285,0) rectangle (.339,1.1927);
\filldraw[fill=yellow!20, draw=yellow!20](.569,0) rectangle (.624,1.1927);
\filldraw[fill=yellow!20, draw=yellow!20](.853,0) rectangle (.908,1.1927);
\draw(0,0)--(.285,0)node[below]{$\frac1{\beta}$}--(.569,0)node[below]{$\frac2{\beta}$}--(.853,0)node[below]{$\frac3{\beta}$}--(1.1927,0)node[below]{$\frac3{\beta-1}$}--(1.1927,1.1927)--(1.05,1.1927)node[above]{$E_3$}--(.875,1.1927)node[above]{$S_3$}--(.73,1.1927)node[above]{$E_2$}--(.595,1.1927)node[above]{$S_2$}--(.46,1.1927)node[above]{$E_1$}--(.308,1.1927)node[above]{$S_1$}--(.14,1.1927)node[above]{$E_0$}--(0,1.1927)--(0,0);
\draw[thick, purple!50!black](0,0)node[below]{0}--(.339,1.1927)(.285,0)--(.624,1.1927)(.569,0)--(.908,1.1927)(.853,0)--(1.1927,1.1927);
\draw[dotted](0,0)--(1.1927,1.1927)(.285,0)--(.285,1.1927)(.339,0)--(.339,1.1927)(.569,0)--(.569,1.1927)(.624,0)--(.624,1.1927)(.853,0)--(.853,1.1927)(.908,0)--(.908,1.1927);
\draw[red] (.285,1)--(1,1)--(1,.515)--(.515,.515)--(.515,.812)--(.812,.812)--(.812,.853)--(.853,.853);
\draw[green] (.908,.1927)--(.1927,.1927)--(.1927,.6777)--(.6777,.6777)--(.6777,.3807)--(.3807,.3807)--(.3808,.339)--(.339,.339);
\end{tikzpicture}
\caption{The intervals $E_k$ and $S_k$ for $\beta \approx 3.515$ given by $\beta^4-3\beta^3-\beta^2-2\beta-3=0$. The red and green lines indicate the orbits of the points 1 and $\frac1{\beta-1}-1$ respectively.}
\label{f:randommap}
\end{figure}
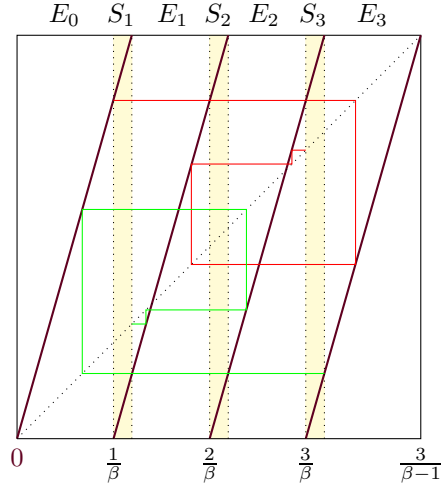
See Figure~\ref{f:randommap} for a picture.  To see the expansion, we first define
\[d_1=d_1(\omega,x):=\begin{cases}
k, & \text{if}~x\in E_k~\text{for}~k\in{0,\dots,\lfloor{\beta}\rfloor}\\
& \text{or} ~(\omega,x)\in \{\omega_1=1\}\times S_k~\text{for}~k\in{1,\dots,\lfloor{\beta}\rfloor}.\\
k-1,&\text{if}~ (\omega,x)\in \{\omega_1=0\}\times S_k~\text{for}~k\in{1,\dots,\lfloor{\beta}\rfloor}.
\end{cases}\]
Define, for each $n\in \N$,
$d_n=d_n(\omega,x):=d_1(K_\beta^{n-1}(\omega,x))$, and let $\pi_2:\Omega\times I_{\beta}\to I_{\beta}$ be the canonical projection onto the second coordinate. Thus
\begin{equation}\label{projeq}
\pi_2(K_\beta^n(\omega,x))=	\beta^nx-\sum_{i=1}^n\beta^{n-i}d_i,
\end{equation}
giving that:
\[x=\sum_{i=1}^n\frac{d_i}{\beta^i}+\frac{\pi_2(K_\beta^n(\omega,x))}{\beta^n}.\]
As $0\leq\pi_2(K_\beta^n(\omega,x))\leq\lfloor{\beta}\rfloor/(\beta-1)$ for all $n$, we thus have
\begin{equation}\label{xrepKbeta}
x=\sum_{n=1}^\infty\frac{d_n}{\beta^n}=\sum_{n=1}^\infty\frac{d_n(\omega,x)}{\beta^n}.
\end{equation}
Define $\varphi: \Omega\times I_{\beta} \to  \{ 0,1, \ldots, \lbetar \}^{\mathbb N}$ by
\[\varphi(\omega,x)=(d_1(\omega,x),d_2(\omega,x),\cdots).\]
In \cite{KarmadeVries} it was shown that if $m$ is the uniform Bernoulli measure on  $\{ 0,1, \ldots, \lbetar \}^{\mathbb N}$, then $\nu_\beta= m \circ \varphi$ is the unique measure of maximal entropy for $K_{\beta}$, with entropy $\log \lceil \beta \rceil$. Moreover, $\varphi$ is a measurable isomorphism between $K_{\beta}$, under the measure $\nu_{\beta}$, and $\sigma$ on $\{ 0,1, \ldots, \lbetar \}^{\mathbb N}$, under the measure $m$.
Furthermore, in \cite{KarmaInv}, it was shown that $K_{\beta}$ also admit an invariant ergodic measure of the form $m_p\times \mu_{\beta}$, where $\mu_{\beta}$ is equivalent to Lebesgue measure on $I_{\beta}$.
\vskip .2cm

Our aim is to give an uncountable family of $K_{\beta}$-invariant $g$-measures. This will be done for a special class of $\beta$'s that will be introduced in Section \ref{MarkovChain}. In Section \ref{gMTO}, we give a brief overview of $g$-measures and we state the important theorems that will be used in this article. In Section \ref{gmeasures}, an explicit family of $g$-measures is exhibited.

\section{$g$-Measures and the Transfer Operator}\label{gMTO}

The first appearance of $g$-measures as an object of strictly mathematical study occurred in Michael Keane's seminal paper \textit{Strongly mixing g-measures} \cite{Keane}. The objects of interest are dynamical systems $(X,T)$, where $(X,d)$ is a compact metric space, and $T$ is a $n\geq 2$ covering transformation.  That is $T$ satisfies the following four conditions:
\begin{itemize}
\item[(i)] $T$ is $n$-to-one.\\
\item[(ii)] $T$ is a local homeomorphism.\\
\item[(iii)] There exists a constant $C>1$ and $\delta>0$ such that for any $x,y\in X$ with $d(x,y)<\le \delta$, we have $d(Tx,Ty)\ge Cd(x,y)$.\\
\item[(iv)] For each $\epsilon>0$ there is $N_{\epsilon}\in \N$ such that $T^{-n}(x)$ is $\epsilon$ dense for every $x\in X$  and $n\ge N_{\epsilon}$..
\end{itemize}
In this paper $g$-measures are defined to be probability measures $\mu$ satisfying a certain Radon-Nikodym derivative relation with respect to a locally lifted (by $T$) measure $Q\mu$. In particular $\frac{d\mu}{dQ\mu}=g$, where $g$ is a member of the class
\[\G:=\{g:X\to[0,1]:\sum_{y\in T^{-1}x}g(y)=1~\mu~\text{a.e.}\}.\]
Members of $\G$ are known as $g$-functions, although in the literature this nomenclature is sometimes reserved exclusively for continuous members of $\mathcal G$. We shall denote continuous members of $\mathcal G$ by $\G_c$.\\
In contemporary literature $g$-measures are generally defined with respect to the transfer operator. For a $\phi\in C(X)$ the associated transfer operator $L_\phi:C(X)\to C(X)$ is defined by
\begin{equation}\label{deftransferop}
L_\phi f(x)=\sum_{y\in T^{-1}x}e^{\phi(y)}f(y).
\end{equation}
It is easy to verify that $L_\phi$ is a bounded linear operator for every $\phi\in C(X)$.
For any $\mu\in M(X)$ and $\phi\in C(X)$ there exists, by the Riesz-Representation, a unique measure $\mu_{L_\phi}\in M(X)$ such that 
\begin{equation}\label{RR}
\int_XL_\phi fd\mu=\int_X fd\mu_{L_\phi}
\end{equation}
for all $f\in C(X)$. In fact, a simple argument shows that equation (\ref{RR}) holds for any bounded $f\in L^1(\mu_{L_\phi})$. For convenience we use $L$ to denote $L_0$, and in line with Keane's original notation we use $Q\mu$ to denote $\mu_{L}$. We can now present two most common definition of a $g$ measure.

\begin{defn}\label{gmeasfixedpointdefn}
A $T$-invariant probability measure $\mu$ is called a $g$-measure if, for some $0<g\in\G_c$,
\[L_{\log g}^*\mu=\mu.\]
\end{defn}
\noindent
Equivalently,
\begin{defn}\label{defgmeas}
A probability measure $\mu$ on $(X,\mathcal B)$ is called, for some $g\in \mathcal G$, a $g$-measure if $\mu\ll Q\mu$ and $\frac{d\mu}{dQ\mu}=g$.
\end{defn}

We denote, as usual by $P(X)$ the set of Borel probability measures on $X$, and by  $P_T(X)$ the sub-collection of $T$-invariant Borel probability measures. Recall that a measure $\mu\in P_T(X)$ is an equilibrium state for $\phi\in C(X)$ if 
\[h_\mu(T)+\mu(\phi)=\sup_{\nu\in P_T(X)}\{h_\nu(T)+\nu(\phi)\}.\]
The right hand side of the above equation is often called the pressure of $T$ with respect to $\phi$, which we shall denote by $\mathcal P_T(\phi)$. We should also note that the notion of pressure and equilibrium states are still perfectly well defined if we consider bounded measurable potentials instead of just continuous ones. It then makes sense, for a given $Y\subset X,$ to define the pressure of $T$ with respect to a potential $\phi$ concentrated on $Y$ by
\[\mathcal P_T^Y(\phi):=\sup_{\nu\in P_T(X)}\{h_\nu(T)+\nu(\phi):\nu(Y)=1\}.\] We thus have the following theorem about concentrated pressures and measure isomorphisms:
\begin{thm}\label{pressureiso}
If $\mu\in P_T(X)$ is an equilibrium state for $\phi\in C(X)$, and there exists measure isomorphism $F$ from $(X,{\sigma},\B,\mu)$ to another measurable dynamical system $(Y,S,\F,F_*\mu)$ where $F:X'\to F(X')$ is a bijection, and $T(X')\subset T$, and $\mu(X')=1$ then $\mathcal P_T(\phi)=\mathcal P_S^{F(X')}(\phi_F)$, where $\phi_F=\phi\circ F^{-1}$ $F_*\mu$ a.e., and $F_*\mu$ is an equilibrium state for $\phi_F$.
\end{thm}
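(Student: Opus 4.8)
The plan is to exploit two standard invariance properties of a measure isomorphism — preservation of Kolmogorov–Sinai entropy and of potential integrals — together with the variational characterisation of pressure, and to establish the claimed equality by proving a two-sided inequality.

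First I would record the easy direction. Since $\mu$ is an equilibrium state for $\phi$, the variational principle gives $\mathcal P_T(\phi) = h_\mu(T) + \mu(\phi)$. Because $F:X'\to F(X')$ is a bi-measurable bijection conjugating $T$ and $S$ with $\mu(X')=1$, the pushforward $F_*\mu$ satisfies $(F_*\mu)(F(X')) = \mu(F^{-1}(F(X'))) = \mu(X') = 1$, so $F_*\mu$ is concentrated on $F(X')$. Isomorphism invariance of entropy yields $h_{F_*\mu}(S) = h_\mu(T)$, and a change of variables gives $(F_*\mu)(\phi_F) = \int_Y \phi\circ F^{-1}\,d(F_*\mu) = \int_X \phi\,d\mu = \mu(\phi)$. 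Hence $F_*\mu$ is admissible in the supremum defining $\mathcal P_S^{F(X')}(\phi_F)$, and
\[
\mathcal P_T(\phi) = h_\mu(T)+\mu(\phi) = h_{F_*\mu}(S) + (F_*\mu)(\phi_F) \le \mathcal P_S^{F(X')}(\phi_F).
\]

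For the reverse inequality I would transport an arbitrary competitor backwards. Let $\lambda\in P_S(Y)$ with $\lambda(F(X'))=1$. Since $F(X')$ carries $\lambda$ and $F^{-1}:F(X')\to X'$ is a bi-measurable bijection intertwining $S$ and $T$, the pushforward $\nu := (F^{-1})_*\lambda$ is a well-defined Borel probability measure on $X$, it is $T$-invariant, and $\nu(X')=1$. Applying the same two invariance properties to $F^{-1}$ gives $h_\nu(T) = h_\lambda(S)$ and $\nu(\phi) = \lambda(\phi_F)$, so $h_\lambda(S)+\lambda(\phi_F) = h_\nu(T)+\nu(\phi) \le \mathcal P_T(\phi)$. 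Taking the supremum over all such $\lambda$ yields $\mathcal P_S^{F(X')}(\phi_F)\le \mathcal P_T(\phi)$, and combining the two bounds gives $\mathcal P_T(\phi) = \mathcal P_S^{F(X')}(\phi_F)$. Since this common value equals $h_{F_*\mu}(S) + (F_*\mu)(\phi_F)$ and $F_*\mu$ is concentrated on $F(X')$, the measure $F_*\mu$ attains the concentrated supremum, i.e.\ it is an equilibrium state for $\phi_F$.

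The main obstacle is the reverse inequality, and more precisely the need to make the phrase \emph{measure isomorphism} rigorous enough to transport not only $\mu$ but every $S$-invariant measure concentrated on $F(X')$. One must verify that $F$ and $F^{-1}$ constitute a genuine bi-measurable conjugacy on the full-measure invariant sets $X'$ and $F(X')$ — so that $T$-invariance of $\nu$ is inherited from $S$-invariance of $\lambda$ — and that entropy is preserved even though $F$ is defined only off a null set. This is where the null-set bookkeeping (using $T(X')\subset X'$ and $\mu(X')=1$) has to be handled with care; the entropy invariance itself is the Kolmogorov–Sinai isomorphism theorem, while the integral identities are routine changes of variables.
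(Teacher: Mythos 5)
Your proposal is correct and follows essentially the same route as the paper's own proof: both directions are obtained by transporting measures through the isomorphism (the paper realises your $(F^{-1})_*\lambda$ by explicitly extending $F^{-1}$ to a measurable map $G$ on all of $Y$, constant off $F(X')$), using invariance of entropy and of the potential integral, and concluding that $F_*\mu$ attains the concentrated supremum. The only cosmetic difference is that the paper makes the null-set bookkeeping you flag explicit via the global map $G$, rather than working with $F^{-1}$ only on the full-measure set.
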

\begin{proof}
Note that $\phi\circ F^{-1}$ is only defined on $F(X')$, but we can extend it to a potential, say $\phi_F\in L^1(F_*\mu)$, by defining $\phi_F|_{F(X')}=\phi\circ F^{-1}$, and $\phi_F|_{Y\backslash F(X')}=0$. By our assumption,$h_\mu(T)=h_{F_*\mu}(S)$. Furthermore, noting that $F_*\mu(F(X'))=\mu(F^{-1}F(X'))=1$,
\[F_*\mu(\phi_F)=\int_{F(X')}\phi\circ F^{-1}dF_*\mu=\int_{X'}(\phi\circ F^{-1})\circ Fd\mu=\mu(\phi).\]
Now assume there exists another $S$ invariant probability measure on $Y$ supported on $X'$ , say $\nu$. Define $G:Y\to X$ by $G|_{F(X')}=F^{-1}$, and $G|_{Y\backslash F(X')}=x$ for an arbitrary $x\in X$. Clearly $G_*\nu$ is then a probability measure on $X$ with $G_*\nu(X')=1$. Thus the systems $(Y,S,\F,\nu)$ and $(X,T,\B,G_*\nu)$ are measurably isomorphic. Hence, $h_\nu(S)=h_{G_*\nu}(T)$, and 
\[G_*\nu(\phi)=\int_{F(X')}\phi\circ Gd\nu=\int_{F(X')}\phi\circ F^{-1}=\nu(\phi_F).\]
By definition we know $h_{G_*\nu}(T)+G_*\nu(\phi)\leq \mathcal P_T(\phi)$, so the above equalities mean that $h_\nu(\phi_F)+\nu(\phi_F)\leq \mathcal P_T(\phi)$, which proves the theorem.
\end{proof}

As most dynamical systems can be translated into an appropriate shift space, we now restrict our attention to $(X,\sigma)$-a topologically mixing-subshift of finite type, with $\sigma$ the left-shift operator, and $X\subset \mathcal A^{\N_0}$, where $\mathcal A$ is some finite alphabet. $X$ is a compact metrisable space when equipped with the topology generated from cylinder sets,\index{Cylinder Sets} which are sets of the form
\[[a_0,\dots,a_n]:=\{x\in X:x_i=a_i~\text{for all}~i=0,\dots,n\},\] for $a_i\in \mathcal A$ and $n\in {\N_0}$. We equip $X$ with the standard metric inducing the product topology, $d:X\times X\to \R^+$, defined by
\[d(x,y)=\frac{1}{\min\{i:x_i\neq y_i\}+1}.\]
In 1974 Ledrappier \cite{Ledrappier} found an interesting link between $g$-measures, conditional expectations and equilibrium states in this setting. 
\begin{thm}\label{Ledrappier}
For $g\in \G_c$ and $\mu\in P(X)$ the following are equivalent:
\begin{itemize}
\item[(i)] $\mu$ is a $g$-measure.
\item[(ii)] $\mu\in P_\sigma(X)$ and 
\[\Eb_\mu(f|\sigma^{-1}\B(X))(x)=\sum_{y\in \sigma^{-1}\circ \sigma x}g(y)f(y)\quad \mu~\text{a.e.}\]
for all $f\in L^1(\mu).$
\item[(iii)] $\mu\in P_\sigma(X)$ and $\mu$ is an equilibrium state for $\log g$.
\end{itemize}
\end{thm}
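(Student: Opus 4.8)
The plan is to prove the Ledrappier equivalences in the cyclic order (i)$\Rightarrow$(ii)$\Rightarrow$(iii)$\Rightarrow$(i), since each implication is most natural in that direction. The central object throughout is the operator $L=L_0$ and its dual $Q$, together with the continuous $g$-function, so the first task is to unwind the two definitions of a $g$-measure in Definition~\ref{defgmeas} and Definition~\ref{gmeasfixedpointdefn} into a statement about conditional expectations. Observe that for the shift $\sigma$ the preimage $\sigma^{-1}\sigma x$ is precisely the set of $n$ points agreeing with $x$ from index $1$ onward, so $\sigma^{-1}\B(X)$ is exactly the $\sigma$-algebra of events that do not depend on the $0$-th coordinate. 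This is what lets us interpret the right-hand side of (ii) as a genuine fibre-average with weights $g(y)$.

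For (i)$\Rightarrow$(ii) I would start from $\frac{d\mu}{dQ\mu}=g$ and compute, for $f\in L^1(\mu)$ and any bounded $\sigma^{-1}\B$-measurable test function $h$, the integral $\int h\cdot(\text{RHS})\,d\mu$ and show it equals $\int h f\,d\mu$; since $h$ factors as $h=\tilde h\circ\sigma$, the key manipulation is to pull the sum over the fibre through the integral using \eqref{RR} in the form $\int L(\,\cdot\,)\,d\mu=\int(\,\cdot\,)\,dQ\mu$ and then replace $dQ\mu$ by $g^{-1}d\mu$. The $\sigma$-invariance of $\mu$ follows by taking $f=\one$ together with $\sum_{y\in\sigma^{-1}\sigma x}g(y)=1$. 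For (ii)$\Rightarrow$(iii) the route is the variational characterisation: the conditional-expectation formula in (ii) is exactly the statement that the information function of $\mu$ relative to the partition into $0$-cylinders equals $-\log g$ almost everywhere, so integrating gives $h_\mu(\sigma)=-\int\log g\,d\mu=-\mu(\log g)$, i.e.\ $h_\mu(\sigma)+\mu(\log g)=0$. To upgrade this to the supremum one invokes the Rokhlin formula together with the standard inequality $h_\nu(\sigma)+\nu(\log g)\le 0$ valid for \emph{every} $\nu\in P_\sigma(X)$, which is proved by a Jensen/concavity estimate on the conditional entropy against the $g$-weights; equality for $\mu$ then forces the pressure $\mathcal P_\sigma(\log g)=0$ and identifies $\mu$ as an equilibrium state.

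For (iii)$\Rightarrow$(i) I would run the same variational inequality in reverse: an equilibrium state $\mu$ for $\log g$ attains $h_\mu(\sigma)+\mu(\log g)=\mathcal P_\sigma(\log g)=0$, and tracing back through the equality case of the Jensen step forces the conditional expectation of the indicator of each $0$-cylinder to coincide $\mu$-a.e.\ with the corresponding $g$-weight; this is precisely the relation $\frac{d\mu}{dQ\mu}=g$, so $\mu$ is a $g$-measure.

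The main obstacle I anticipate is the equality-case analysis in the variational inequality of steps (ii)$\Rightarrow$(iii) and (iii)$\Rightarrow$(i): the inequality $h_\nu(\sigma)+\nu(\log g)\le 0$ is a clean consequence of Jensen's inequality applied fibrewise, but extracting the pointwise almost-everywhere identification of the conditional expectation from equality requires care, because one must rule out the concavity slack on a positive-measure set and this uses both the continuity and strict positivity of $g$ (so that $\log g$ is bounded and the fibre sums are genuine convex combinations). A secondary technical point is justifying that \eqref{RR} may be applied to the non-continuous test functions $hf$ appearing in (i)$\Rightarrow$(ii); this is handled by the density/monotone-class extension already noted in the excerpt immediately after \eqref{RR}, so it should be routine rather than delicate.
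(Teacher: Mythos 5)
The paper itself offers no proof of this theorem: it is quoted from Ledrappier's 1974 paper (the reference \cite{Ledrappier}), so there is no in-paper argument to compare against; your cyclic plan (i)$\Rightarrow$(ii)$\Rightarrow$(iii)$\Rightarrow$(i) is the standard route, essentially Ledrappier's as reworked by Walters. The steps (i)$\Rightarrow$(ii) and (ii)$\Rightarrow$(iii) are correctly sketched: the duality computation with $Q\mu$ against test functions $h=\tilde h\circ\sigma$, the derivation of $\sigma$-invariance by taking $f=\one$, the identification of the conditional information function with $-\log g$ so that $h_\mu(\sigma)=-\mu(\log g)$ via the Rokhlin/generator formula, and the fibrewise Jensen inequality $h_\nu(\sigma)+\nu(\log g)\le 0$ for every $\nu\in P_\sigma(X)$ all go through as you describe (with the caveat, which you correctly flag, that $g$ must be strictly positive and continuous for $\log g$ to be a legitimate potential and for the equality case to force the conditional expectations to equal the $g$-weights).

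The genuine gap is in (iii)$\Rightarrow$(i): you assert $h_\mu(\sigma)+\mu(\log g)=\mathcal P_\sigma(\log g)=0$, but from (iii) alone you only know that $\mu$ attains the supremum, and your Jensen step only gives $\mathcal P_\sigma(\log g)\le 0$. If the supremum were strictly negative, an equilibrium state would satisfy $h_\mu(\sigma)+\mu(\log g)<0$ and the equality-case analysis of Jensen, which is the entire engine of this implication, never starts. You must separately prove $\mathcal P_\sigma(\log g)=0$. Two standard fixes: (a) observe that $L_{\log g}\one=\one$, so $L_{\log g}^*$ is a continuous affine self-map of the compact convex set $P(X)$ and has a fixed point by Schauder--Tychonoff; that fixed point is a $g$-measure in the sense of Definition \ref{gmeasfixedpointdefn}, and your already-proved implications (i)$\Rightarrow$(ii)$\Rightarrow$(iii) show it attains the value $0$, whence $\mathcal P_\sigma(\log g)=0$; or (b) invoke the variational principle to identify $\mathcal P_\sigma(\log g)$ with the topological pressure of $\log g$ and compute that it vanishes from $L_{\log g}^n\one=\one$. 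Either patch is short, but without one of them the implication (iii)$\Rightarrow$(i) is unsupported. A minor secondary point: on a subshift of finite type the fibre $\sigma^{-1}\sigma x$ need not have exactly $n$ points (its cardinality depends on which symbols $a$ make $a x_1 x_2\cdots$ admissible), but this affects none of your estimates, since all sums run over the actual fibre.
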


A useful technical lemma, which we will need later, also follows from Ledrappier's Theorem \ref{Ledrappier}.
\begin{lem}[{\cite[Lemma 2.1]{WaltRu}}]\label{gmeassupport}\ \\
\begin{itemize}
\item[(i)] If $g\in \G_c$ then every $g$-measure has full support.
\item[(ii)] If $g_1,g_2\in \G_c$ share a $g$-measure $\mu$, then $g_1=g_2$.
\end{itemize}
\end{lem}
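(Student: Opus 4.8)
The plan is to reduce both statements, via Ledrappier's Theorem~\ref{Ledrappier}(ii), to concrete identities on the finitely many preimages in each fibre $\sigma^{-1}\sigma x=\{a'\,\sigma x: a'\to x_1\text{ admissible}\}$, and then to exploit two features of the continuous setting: a continuous $g$-function on the compact space $X$ is strictly positive (cf.\ Definition~\ref{gmeasfixedpointdefn}), hence bounded below by some $c>0$; and $X$, being a topologically mixing subshift of finite type, has a strongly connected transition graph. Throughout I write $m_a:=\mu([a])$ for $a\in\mathcal A$ and use that $\mu$ is $\sigma$-invariant, so $\mu(\sigma^{-1}B)=\mu(B)$. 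The single computational input is that, for an admissible word $w=a_0\cdots a_n$, the indicator $\one_{[w]}$ isolates exactly the one preimage $a_0\,\sigma x$ in each fibre, and this preimage lies in $[w]$ precisely when $\sigma x\in[a_1,\dots,a_n]$.

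For (i) I would first establish positivity on one-symbol cylinders. Applying Ledrappier's identity to $f=\one_{[a,b]}$ (for admissible $a\to b$) and integrating gives
\[\mu([a,b])=\int_{\sigma^{-1}[b]}g\big(a\,\sigma x\big)\,d\mu(x)\ \ge\ c\,\mu(\sigma^{-1}[b])=c\,m_b,\]
using $g\ge c$ and $\sigma$-invariance; since $m_a\ge\mu([a,b])$ this yields $m_a\ge c\,m_b$ whenever $a\to b$ is admissible. Iterating along a path in the strongly connected transition graph gives $m_a\ge c^{\ell}\,m_{a'}$ for all symbols $a,a'$, and since the $m_{a'}$ sum to $1$ at least one — hence every one — is positive. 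I would then induct on word length by \emph{prepending}: the same computation applied to $f=\one_{[a_0,\dots,a_n]}$ gives
\[\mu([a_0,\dots,a_n])=\int_{\sigma^{-1}[a_1,\dots,a_n]}g\big(a_0\,\sigma x\big)\,d\mu(x)\ \ge\ c\,\mu([a_1,\dots,a_n]),\]
so positivity propagates from $[a_1,\dots,a_n]$ to $[a_0,\dots,a_n]$. As every admissible word is built by prepending symbols to a single symbol, every cylinder has positive measure and $\mu$ has full support.

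For (ii), suppose $\mu$ is simultaneously a $g_1$- and a $g_2$-measure, and set $h:=g_1-g_2$, which is continuous. Subtracting the two instances of Ledrappier's identity shows $\sum_{y\in\sigma^{-1}\sigma x}h(y)f(y)=0$ $\mu$-a.e.\ for all $f\in L^1(\mu)$; taking $f=\one_{[a]}$ for each of the finitely many symbols $a$ isolates a single term and, after intersecting the corresponding full-measure sets, produces a set $X_0$ with $\mu(X_0)=1$ on which $h(a\,\sigma x)=0$ for every admissible symbol $a$. To upgrade this to $h\equiv0$, I would fix any $z\in X$ and examine its cylinder neighbourhoods $[z_0,\dots,z_n]$. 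Each $\sigma^{-1}[z_1,\dots,z_n]$ has positive measure by part (i) and invariance, so it meets $X_0$; for $x$ in the intersection the point $w:=z_0\,\sigma x$ lies in $[z_0,\dots,z_n]$ and satisfies $h(w)=0$. Thus every neighbourhood $[z_0,\dots,z_n]$ of $z$ contains a zero of $h$; these zeros converge to $z$ as $n\to\infty$, and continuity of $h$ forces $h(z)=0$. Since $z$ is arbitrary, $g_1=g_2$.

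The two integral identities are routine consequences of Theorem~\ref{Ledrappier}(ii) and I would not dwell on them. I expect the genuine obstacle to be the final step of (ii): promoting an almost-everywhere vanishing statement to the pointwise identity $g_1=g_2$. This is precisely where both ingredients must be combined — full support (from (i)) makes the zeros of $h$ dense inside every cylinder, and the continuity built into $\G_c$ then forces $h$ to vanish identically; neither alone suffices. A secondary delicate point is getting (i) started: there is no preferred cylinder known in advance to have positive measure, so I must lean on strong connectivity of the transition graph together with the uniform lower bound $g\ge c$ to spread positivity from the (necessarily nonempty) set of positive-measure symbols across the whole alphabet.
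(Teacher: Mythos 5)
Your proof is correct, but note that there is nothing in the paper to compare it against: the paper states this lemma without proof, quoting it as Lemma 2.1 of Walters \cite{WaltRu}. Measured against Walters' original argument, your route is the same in spirit but genuinely more elementary because you exploit the subshift structure. Walters works with general expanding covering maps and proves (i) by iterating the transfer operator: given a nonempty open $U$, the covering property supplies an $n$ with a $T^n$-preimage of every point inside $U$, whence $\mu(U)=\int L_{\log g}^n\one_U\,d\mu\ge(\inf g)^n>0$. You avoid iterates entirely: a single application of the prepending identity gives $\mu([a_0,\dots,a_n])\ge c\,\mu([a_1,\dots,a_n])$, and the induction is seeded by spreading positivity across the alphabet along the strongly connected transition graph. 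Your part (ii) is the standard mechanism in both treatments: the $g$-function is a version of the conditional expectation weights, hence determined $\mu$-a.e.; full support from (i) makes the zeros of $g_1-g_2$ dense in every cylinder, and continuity upgrades a.e.\ vanishing to identical vanishing. One caveat for the write-up: strict positivity of $g$ is \emph{not} a consequence of continuity, as your opening sentence might suggest; it is part of the definition of a $g$-function in Walters' setting (and in Definition \ref{gmeasfixedpointdefn}), and the lemma genuinely fails without it --- on the full $2$-shift the function $g=\one_{[0]}$ lies in the paper's $\G_c$ (it is continuous and its fibre sums equal $1$), yet $\delta_{0^\infty}$ is a $g$-measure for it in the sense of Definition \ref{defgmeas} and has no support on $[1]$. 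Since your argument invokes positivity as definitional and then uses compactness only to get the uniform bound $g\ge c>0$, it stands as written once that sentence is rephrased.
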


The following equivalent definitions of $g$-measures also prove useful.
\begin{prop}\label{gmeasnewequiv}
For $g\in\G_c$ and $\mu\in P(X)$ the following are equivalent:
\begin{itemize}
\item[(i)] $\mu$ is a $g$-measure.
\item[(ii)] $\mu\in P_\sigma(X)$,and for every $f\in L^1(\mu)$.
\[\int_X fd\mu =\int_X \sum_{y\in \sigma^{-1}x}f(y)g(y)d\mu(x)\]
\item[(iii)] For every $f\in L^1(\mu)$
\[\int_X fd\mu =\int_X \sum_{y\in \sigma^{-1}\sigma x}f(y)g(y)d\mu(x).\]
\end{itemize}
\end{prop}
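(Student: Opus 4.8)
The plan is to route all three conditions through the transfer operator, using the two elementary identities
\[
\sum_{y\in\sigma^{-1}x}f(y)g(y)=L_{\log g}f(x),\qquad
\sum_{y\in\sigma^{-1}\sigma x}f(y)g(y)=\big(L_{\log g}f\big)(\sigma x),
\]
the second of which holds because $\sigma^{-1}\sigma x$ is exactly the $\sigma$-fibre over the single point $\sigma x$. With these, condition (ii) becomes $\int_X f\,d\mu=\int_X L_{\log g}f\,d\mu$ for all $f\in L^1(\mu)$, i.e.\ $L_{\log g}^*\mu=\mu$, while condition (iii) becomes $\int_X f\,d\mu=\int_X (L_{\log g}f)\circ\sigma\,d\mu=\int_X L_{\log g}f\,d(\sigma_*\mu)$. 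I shall also use repeatedly the identity $L_{\log g}(h\circ\sigma)=h$, which is immediate from $\sum_{y\in\sigma^{-1}x}g(y)=1$ for $g\in\G$.

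For (i)$\Leftrightarrow$(ii) I would start from Definition \ref{gmeasfixedpointdefn}. By the Riesz representation (\ref{RR}) and the remark following it, $L_{\log g}^*\mu=\mu$ is the statement $\int_X L_{\log g}f\,d\mu=\int_X f\,d\mu$ for bounded $f$; the first genuine step is to upgrade this to all $f\in L^1(\mu)$, which I would do by truncation and monotone convergence, using $0\le g\le 1$ to dominate $L_{\log g}|f|$. The only remaining point is that the equation in (ii) already forces $\mu\in P_\sigma(X)$: testing $L_{\log g}^*\mu=\mu$ against $h\circ\sigma$ and applying $L_{\log g}(h\circ\sigma)=h$ gives $\int_X h\circ\sigma\,d\mu=\int_X h\,d\mu$, i.e.\ $\sigma_*\mu=\mu$. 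Thus (ii) is exactly Definition \ref{gmeasfixedpointdefn}.

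The implication (ii)$\Rightarrow$(iii) is then immediate: once $\sigma_*\mu=\mu$ is available, the recast form of (iii) reads $\int_X L_{\log g}f\,d(\sigma_*\mu)=\int_X L_{\log g}f\,d\mu=\int_X f\,d\mu$. The substantive direction, and the one I expect to be the main obstacle, is (iii)$\Rightarrow$(ii). Here my plan is to identify the operator $Rf(x):=\sum_{y\in\sigma^{-1}\sigma x}f(y)g(y)$ with the conditional expectation $\Eb_\mu(f\mid\sigma^{-1}\B)$: it is $\sigma^{-1}\B$-measurable, it satisfies $R\big(f\cdot(\one_B\circ\sigma)\big)=(\one_B\circ\sigma)\,Rf$ because $\one_B\circ\sigma$ is constant on each fibre, and substituting $f\cdot(\one_B\circ\sigma)$ into (iii) yields $\int_{\sigma^{-1}B}Rf\,d\mu=\int_{\sigma^{-1}B}f\,d\mu$ for every $B\in\B$. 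Together these give $Rf=\Eb_\mu(f\mid\sigma^{-1}\B)$, which is condition (ii) of Ledrappier's Theorem \ref{Ledrappier}; once $\mu\in P_\sigma(X)$ is in hand, that theorem concludes that $\mu$ is a $g$-measure.

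The delicate point, where I expect the real difficulty to sit, is recovering $\sigma$-invariance from (iii) alone. In contrast to the (ii) equation, whose invariance is forced by $L_{\log g}(h\circ\sigma)=h$, the recast (iii) equation only asserts $\mu=L_{\log g}^*(\sigma_*\mu)$, and because $\sigma_* L_{\log g}^*=\id$ on measures this relation is compatible with a large family of $\mu$. I would therefore not attempt to extract invariance from the displayed identity in isolation, but instead bring in the extra structure available here: the positivity $0<g\in\G_c$, full support (Lemma \ref{gmeassupport}), and topological mixing of $\sigma$. Concretely, after identifying $Rf$ as a conditional expectation I would use these to pin down $\sigma_*\mu$, and this is the single step where continuity and strict positivity of $g$ — rather than mere membership in $\G$ — should be doing essential work.
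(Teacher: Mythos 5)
Your reductions are correct as far as they go, and they are carried out cleanly: recasting (ii) as $L_{\log g}^*\mu=\mu$, the observation that this equation alone forces $\sigma_*\mu=\mu$ (testing against $h\circ\sigma$ and using $L_{\log g}(h\circ\sigma)=h$), the truncation/monotone-convergence upgrade from bounded functions to all of $L^1(\mu)$, the implication (ii)$\Rightarrow$(iii), and the identification of $Rf=(L_{\log g}f)\circ\sigma$ with $\Eb_\mu(f\mid\sigma^{-1}\B(X))$ under (iii). For the record, the paper states this proposition with no proof at all, so there is nothing on the paper's side to compare against.

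The one direction you do not prove is (iii)$\Rightarrow$(i): you end with a plan (``use positivity, full support and mixing to pin down $\sigma_*\mu$'') rather than an argument. That gap cannot be filled, because the implication is false as the proposition is printed, and your own recasting shows why. You note that (iii) says exactly $\mu=L_{\log g}^*(\sigma_*\mu)$ and that $\sigma_*L_{\log g}^*=\id$ on measures. Consequently \emph{every} measure in the image $L_{\log g}^*(P(X))$ satisfies (iii): for arbitrary $\nu\in P(X)$, the measure $\mu:=L_{\log g}^*\nu$ has $\sigma_*\mu=\nu$ and hence $L_{\log g}^*(\sigma_*\mu)=\mu$; such measures need not be invariant. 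Concretely, on the full two-shift with $g\equiv\tfrac12$ and $x=(1,0,0,\dots)$, let $0x=(0,1,0,0,\dots)$ and $1x=(1,1,0,0,\dots)$ be the two $\sigma$-preimages of $x$ and take
\[
\mu=L_{\log g}^*\delta_x=\tfrac12\,\delta_{0x}+\tfrac12\,\delta_{1x}.
\]
For both atoms $z$ one has $\sigma^{-1}\sigma z=\{0x,1x\}$, so $\sum_{y\in\sigma^{-1}\sigma z}f(y)g(y)=\tfrac12 f(0x)+\tfrac12 f(1x)=\int f\,d\mu$, and (iii) holds for every $f\in L^1(\mu)$; yet $\sigma_*\mu=\delta_x\neq\mu$, so $\mu$ is not invariant and in particular not a $g$-measure. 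The same $\mu$ also satisfies the conditional-expectation identity, which shows that your intended route through Theorem \ref{Ledrappier} breaks precisely at its invariance hypothesis; note also that invoking Lemma \ref{gmeassupport} for full support of $\mu$ would be circular, since that lemma presupposes $\mu$ is already a $g$-measure. The honest conclusion of your analysis is that the statement itself needs repair: either add $\mu\in P_\sigma(X)$ to (iii) (after which (iii) coincides with (ii) by your recasting), or replace $\sigma^{-1}\sigma x$ by $\sigma^{-1}x$ in (iii) (after which invariance is automatic, by the same test against $h\circ\sigma$). With either repair, the implications you did prove are complete and correct.
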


\bigskip
We want to find conditions on $g$-functions on $(X,\sigma)$ guaranteeing unique equilibrium states. To that end we define, for any $\phi\in C(X)$ and $n\in \N$ the $n$th-variation \index{$n$-th variation}
\[\var_n(\phi):=\sup\{|\phi(x)-\phi(y)|:y_i=x_i,\, 0\leq i\leq n-1\}.\]
We introduce the function $\sigma_n:C(X)\to C(X)$ with the definition
\[\sigma_n\phi(x)=\sum_{j=0}^{n-1}\phi(\sigma^j x).\]We say that $\phi$ satisfies the so-called Bowen condition \cite{WaltersBow} if there exists a $k\in \N$ such that 
\[\sup_{n\geq 1}\var_{n+k}(\phi\circ \sigma^n)<\infty.\] We denote by $\bow(X,\sigma)$ the space of all $\phi\in C(X)$ satisfying the Bowen condition.
Finally we say that $\phi\in C(X)$ has \textit{summable variation} if
\[\sum_{n=1}^\infty\var_n(\phi)<\infty.\]

The utility of $g$-measures arises in light of the following classic theories. In 1975 Walters \cite{WaltRu} adapted a proof of Keane's \cite{Keane} to prove the following:
\begin{thm}[{\cite[Theorem 3.1]{WaltRu}}]\label{RuelleconvergenceThm}
If $g\in \G_c$ and $\log g$ has summable variation then there exists a unique $g$ measure $\mu$, and $L_{\log g}f$ converges uniformly to $\mu(f)$ for every $f\in C(X)$.
\end{thm}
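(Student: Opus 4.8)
The plan is to realize the unique $g$-measure as a fixed point of the dual operator and to obtain the convergence statement from an equicontinuity argument reinforced by a maximum principle. Throughout write $L=L_{\log g}$, so that $Lf(x)=\sum_{y\in\sigma^{-1}x}g(y)f(y)$; since $g\in\G_c$ is a $g$-function, $L\one=\one$, and iterating gives $L^{n}f(x)=\sum_{y\in\sigma^{-n}x}g_n(y)f(y)$ with $g_n=e^{\sigma_n(\log g)}$ and $\sum_{y\in\sigma^{-n}x}g_n(y)=1$ (we read the assertion as convergence of the iterates $L^{n}f$). Note $g>0$, forced by $\log g$ being real-valued of finite variation, and set $b:=\sum_{n\ge1}\var_n(\log g)<\infty$, $b_m:=\sum_{l>m}\var_l(\log g)$, so $b_m\le b$ and $b_m\to0$. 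First I would establish existence of an invariant measure: because $L\one=\one$ and $L$ is positive, the adjoint $L^{*}$ maps the weak-$*$ compact convex set $P(X)$ into itself and is weak-$*$ continuous, so the Schauder--Tychonoff (equivalently Markov--Kakutani) theorem furnishes a fixed point $\mu=L^{*}\mu$. By Definition~\ref{gmeasfixedpointdefn} this $\mu$ is a $g$-measure, and $\mu(L^{n}f)=\mu(f)$ for all $n$ and all $f\in C(X)$. Since $L$ is positive and fixes $\one$, one has $\min f\le L^{n}f\le\max f$, so $a_n:=\max_{X}L^{n}f$ is non-increasing and $c_n:=\min_{X}L^{n}f$ is non-decreasing, converging to limits $a\ge c$; as $c_n\le L^{n}f\le a_n$, it suffices to prove $a_n-c_n\to0$, for then $L^{n}f\to a$ uniformly.

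Second, I would prove that $\{L^{n}f\}_{n\ge1}$ is uniformly bounded (clear, since $\|L^{n}f\|_\infty\le\|f\|_\infty$) and equicontinuous. The latter is where the summable variation enters. For $x,x'$ agreeing in their first $m$ coordinates, matching each $y\in\sigma^{-n}x$ with the preimage $y'\in\sigma^{-n}x'$ sharing its length-$n$ prefix, we split
\[L^{n}f(x)-L^{n}f(x')=\sum_{y}g_n(y)\big(f(y)-f(y')\big)+\sum_{y}\big(g_n(y)-g_n(y')\big)f(y').\]
Using $\sum_y g_n(y)=1$ for the first sum and the bounded-distortion estimate $|\sigma_n(\log g)(y)-\sigma_n(\log g)(y')|\le b_m$ for the second, one obtains
\[\var_m(L^{n}f)\le\var_{n+m}(f)+(e^{b_m}-1)\|f\|_\infty\le\var_m(f)+(e^{b_m}-1)\|f\|_\infty.\]
Since $\var_m(f)\to0$ (as $f$ is uniformly continuous) and $b_m\to0$, the right-hand side tends to $0$ as $m\to\infty$, uniformly in $n$; this is equicontinuity. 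By Arzel\`a--Ascoli the family is precompact in $C(X)$, so every subsequence has a uniformly convergent sub-subsequence.

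Third, and this is the main obstacle, I would show that every uniform limit point $h=\lim_k L^{n_k}f$ is constant. The essential point is that bounded distortion alone is insufficient here, since it only controls oscillation up to the multiplicative floor $e^{b}$; what breaks this floor is the combination of $g>0$ with the topological mixing of $(X,\sigma)$. Since $L^{n_k+j}f=L^{j}(L^{n_k}f)\to L^{j}h$ uniformly, taking limits in $\max L^{n_k+j}f=a_{n_k+j}\to a$ gives $\max L^{j}h=a$ for every $j\ge0$. Now fix a large $j$ and choose $\xi_j$ with $L^{j}h(\xi_j)=a$. As $L^{j}h(\xi_j)=\sum_{y\in\sigma^{-1}\xi_j}g(y)\,L^{j-1}h(y)$ is a convex combination (weights $g(y)>0$ summing to $1$) of values $\le\max L^{j-1}h=a$, the maximum principle forces $L^{j-1}h(y)=a$ for all $y\in\sigma^{-1}\xi_j$. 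Iterating the principle down to level $0$ yields $\sigma^{-j}\xi_j\subseteq\{h=a\}$. By topological mixing (condition (iv), density of preimages) the set $\sigma^{-j}\xi_j$ is $\epsilon$-dense once $j\ge N_\epsilon$, so the closed set $\{h=a\}$ is $\epsilon$-dense for every $\epsilon>0$, hence equals $X$ and $h\equiv a$. Therefore $a=c$, so $a_n-c_n\to0$ and $L^{n}f\to a$ uniformly.

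Finally, invariance identifies the constant: from $\mu(L^{n}f)=\mu(f)$ and uniform convergence, $a=\lim_n\mu(L^{n}f)=\mu(f)$, so $L^{n}f\to\mu(f)$ uniformly for every $f\in C(X)$. Uniqueness is then immediate, since any $g$-measure $\nu$ satisfies $L^{*}\nu=\nu$, whence $\nu(f)=\nu(L^{n}f)\to\mu(f)$ and thus $\nu=\mu$.
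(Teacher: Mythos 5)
The paper does not prove this statement---it is quoted verbatim from Walters \cite{WaltRu}---so there is no internal proof to compare against; your argument is a correct reconstruction of the classical Keane--Walters proof: dual fixed point for existence, the summable-variation distortion bound $b_m=\sum_{l>m}\var_l(\log g)$ for equicontinuity of $\{L^n f\}$, the maximum principle (using $g>0$) combined with eventual $\epsilon$-density of preimages (which does hold for topologically mixing subshifts of finite type, via primitivity of the adjacency matrix) to force every limit point to be constant, and invariance of $\mu$ to identify the constant and obtain uniqueness. The only step you leave implicit is that the Schauder--Tychonoff fixed point $\mu$ of $L^*$ is actually $\sigma$-invariant, as Definition~\ref{gmeasfixedpointdefn} requires; this follows in one line from $L(f\circ\sigma)=f\cdot L\one=f$, whence $\mu(f\circ\sigma)=\mu\bigl(L(f\circ\sigma)\bigr)=\mu(f)$.
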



The above theorem implies that there is a strong connection between $g$-measures and the ergodic properties of $\sigma$.
\begin{thm}[{\cite[Theorem 3.2]{WaltRu}}]\label{RuelleMixing}
Let $(X,\sigma)$ be a one-side topologically mixing subshift of finite type, and $g\in \G_c$ have $\log g$ finite variation. Then the unique $g$-measure, $\mu$, given by Theorem \ref{RuelleconvergenceThm}, has a Bernoulli natural extension. Thus $\sigma$ is strongly mixing with respect to $\mu$, and is an exact endomorphism.
\end{thm}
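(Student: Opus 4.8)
The plan is to extract everything from the Ruelle--Perron--Frobenius convergence already supplied by Theorem \ref{RuelleconvergenceThm}, reading that statement as the assertion that the iterates $L^n_{\log g}f$ converge uniformly to the constant $\mu(f)$. Since $g\in\G_c$, the operator $L:=L_{\log g}$ is normalized, $L\one=\one$ (because $\sum_{y\in\sigma^{-1}x}g(y)=1$), and by Definition \ref{gmeasfixedpointdefn} the unique $g$-measure satisfies $L^*\mu=\mu$, so $\int Lw\,d\mu=\int w\,d\mu$. The computational engine is the transfer-operator duality: from $L\big((u\circ\sigma)\,v\big)=u\,Lv$ together with $\int Lw\,d\mu=\int w\,d\mu$ one obtains, for all $u,v\in C(X)$ and $n\ge 1$,
\begin{equation}\label{duality}
\int_X (u\circ\sigma^n)\,v\,d\mu=\int_X u\,(L^n v)\,d\mu .
\end{equation}
First I would note that cylinders are clopen, so their indicators lie in $C(X)$ and \eqref{duality} applies to them; this also identifies $L$ as the Frobenius--Perron operator of $(\sigma,\mu)$.

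Strong mixing is then immediate: taking $u=f$, $v=h$ in \eqref{duality} and letting $n\to\infty$, Theorem \ref{RuelleconvergenceThm} gives $L^n h\to\mu(h)$ uniformly, whence $\int (f\circ\sigma^n)\,h\,d\mu=\int f\,(L^n h)\,d\mu\to \mu(f)\mu(h)$, first for $f,h\in C(X)$ and then for $f,h\in L^2(\mu)$ by density. For exactness I would invoke Lin's criterion, namely that $\sigma$ is exact with respect to $\mu$ iff $\|L^n f\|_{L^1(\mu)}\to 0$ for every $f\in L^1(\mu)$ with $\mu(f)=0$. Theorem \ref{RuelleconvergenceThm} gives this (uniformly, hence in $L^1$) for $f\in C(X)$; since $L$ is a Markov operator with $\|L\|_{L^1(\mu)\to L^1(\mu)}\le 1$ and $C(X)$ is dense in $L^1(\mu)$, a routine $\varepsilon/3$ argument extends the $L^1$-convergence $L^n f\to\mu(f)$ to all $f\in L^1(\mu)$. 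This yields exactness of $\sigma$, and in particular (by Rohlin's theorem) that its natural extension $(\hat X,\hat\sigma,\hat\mu)$ is a $K$-automorphism.

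The main obstacle is upgrading the $K$-property to the Bernoulli property of $(\hat X,\hat\sigma,\hat\mu)$. Here I would use the Friedman--Ornstein theorem that a stationary process which is weak Bernoulli generates a Bernoulli shift, and verify weak Bernoulli for the partition into $1$-cylinders. The crucial input is the bounded-distortion (Gibbs) estimate that summable variation of $\phi=\log g$ provides: there is $M\ge 1$ with $M^{-1}\le \mu([a_0\cdots a_{n-1}])\,e^{-\sigma_n\phi(y)}\le M$ for every $y\in[a_0\cdots a_{n-1}]$ and all $n$ (recall the normalized $g$-function has pressure $0$, so no exponential pressure factor appears). Combining this with \eqref{duality} for pairs of cylinder indicators and the uniform convergence $L^N\one_{[a]}\to\mu([a])$ controls the dependence between a past block on coordinates $0,\dots,k-1$ and a future block on coordinates $k+N,\dots,k+N+m-1$; concretely one aims to show
\[
\sum_{A,B}\big|\mu(A\cap\sigma^{-(k+N)}B)-\mu(A)\mu(B)\big|\xrightarrow[N\to\infty]{}0
\]
uniformly in $k$, with $A,B$ ranging over finite past and future cylinders. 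This is exactly the weak Bernoulli condition, and topological mixing of the subshift is what guarantees that the admissible concatenations connecting past and future blocks exist once $N$ is large, so that the distortion bound can be applied uniformly.

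I expect the quantitative control of this weak-Bernoulli sum to be the delicate step. The subtlety is that one must pass from the additive/uniform statement $L^n\to\mu(\cdot)$ to a multiplicative (Gibbs) comparison of $\mu(A\cap\sigma^{-(k+N)}B)$ with $\mu(A)\mu(B)$, and then sum the errors over \emph{all} cylinder pairs while keeping the bound uniform in the block lengths and in $k$; this is where summable variation, rather than mere continuity of $\log g$, is essential. Once weak Bernoulli is established, Friedman--Ornstein gives that $\hat\sigma$ is Bernoulli, from which strong mixing and exactness of $\sigma$ also follow \emph{a fortiori}, consistent with the two earlier paragraphs.
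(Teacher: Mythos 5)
You should first be aware that the paper contains no proof of Theorem \ref{RuelleMixing}: it is imported verbatim from Walters \cite{WaltRu} (Theorem 3.2 there), so the only meaningful comparison is with that source, and your outline does follow the same route Walters takes. Your treatment of mixing and exactness is correct and essentially complete: for a $g$-measure the duality $\int_X (u\circ\sigma^n)\,v\,d\mu=\int_X u\,L^n v\,d\mu$ with $L:=L_{\log g}$ is valid, so the uniform convergence in Theorem \ref{RuelleconvergenceThm} yields strong mixing, and Lin's criterion together with the $L^1(\mu)$-contractivity of $L$ and the density of $C(X)$ in $L^1(\mu)$ yields exactness. (The hypothesis ``finite variation'' in the statement must be read as summable variation, as in Theorem \ref{RuelleconvergenceThm}; you implicitly and correctly assume this.)

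The genuine gap is in the Bernoulli step, which is the headline claim of the theorem. Reducing it, via Friedman--Ornstein, to the weak Bernoulli property of the (generating) one-cylinder partition is the right move, but the decisive estimate is announced rather than proved, and the ingredient you isolate as ``crucial'' is not the one that makes the argument work. A Gibbs bound with a \emph{fixed} constant $M>1$ only gives $\mu(A\cap\sigma^{-(k+N)}B)\asymp\mu(A)\mu(B)$ with multiplicative error bounded away from $1$, and summing such errors over all pairs of cylinders produces a quantity that does not tend to $0$ as $N\to\infty$. What is actually needed is a ratio estimate that improves with the gap, e.g. $\sup_A\|L^{k+N}\one_A/\mu(A)-1\|_\infty\le\epsilon_N$ with $\epsilon_N\to0$ uniformly in $k$ and in the $k$-cylinder $A$; granted this, the weak-Bernoulli sum is at most $\epsilon_N$ by your duality identity. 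The estimate comes from the tail bound: if $y,y'$ agree in their first $k+N$ coordinates, then $|\sigma_k\log g(y)-\sigma_k\log g(y')|\le\sum_{j>N}\var_j(\log g)$, which tends to $0$ uniformly in $k$ precisely because the variations are \emph{summable}; and one must additionally upgrade Theorem \ref{RuelleconvergenceThm}, which is stated for a single fixed $f$, to convergence of $L^N$ uniform over the infinite family $\{L^k\one_A/\mu(A)\}$ (the uniform variation bounds make this family precompact in $C(X)$, so an equicontinuity argument suffices). None of this appears in your write-up. Separately, your closing claim that exactness follows ``a fortiori'' from Bernoullicity of the natural extension is false --- a two-sided Bernoulli shift is its own natural extension, yet no nontrivial invertible system is exact --- but this slip is harmless here, since your second paragraph proves exactness directly.
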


This leads to a fundamental theorem, crucial to studying equilibrium states \cite{WaltRu}:
\begin{thm}[Ruelle's Operator Theorem {\cite[Theorem 3.3]{WaltRu}}]\label{RuelleOp}
If $\phi\in C(X)$ has summable \index{Ruelle's Operator Theorem} variation, then there exists a $\lambda>0$, strictly positive $h\in C(X)$, and $\nu\in P(X)$ such that $\nu(h)=1$, $L_\phi h=\lambda h$, $L^*_\phi\nu=\lambda\nu$, and $\|L_\phi^n f/\lambda^n-\nu(f)h\|_\infty\to 0$ for all $f\in C(X)$.
\end{thm}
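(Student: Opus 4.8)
The plan is to produce the triple $(\lambda, h, \nu)$ in three constructive stages and then deduce the convergence statement by reducing to a normalized $g$-function and invoking Theorem \ref{RuelleconvergenceThm}. First I would produce the eigenmeasure $\nu$ by a fixed-point argument. Since $\phi\in C(X)$, the operator $L_\phi$ is bounded and positive with $L_\phi 1>0$, so the normalized dual action $\Phi(m)=L_\phi^*m/m(L_\phi 1)$ is a weak-$*$ continuous self-map of the compact convex set $P(X)$. The Schauder--Tychonoff theorem yields a fixed point $\nu$, i.e. $L_\phi^*\nu=\lambda\nu$ with $\lambda=\nu(L_\phi 1)>0$. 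In parallel I would record the distortion estimate driving everything else: writing $V=\sum_{n\ge 1}\var_n(\phi)<\infty$, if $x,y$ lie in a common cylinder $[a_0,\dots,a_{n-1}]$ then $\sigma^k x$ and $\sigma^k y$ agree in their first $n-k$ coordinates, so
\[
|\sigma_n\phi(x)-\sigma_n\phi(y)|\le\sum_{k=0}^{n-1}\var_{n-k}(\phi)\le V .
\]
This uniform control of Birkhoff sums on cylinders is the engine of the argument.

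Next I would build the eigenfunction from the normalized iterates $h_n:=\lambda^{-n}L_\phi^n 1$. Using the distortion bound together with the topological mixing of $(X,\sigma)$ (which guarantees that for large $n$ every point has a full set of $n$-preimages), one checks that the $h_n$ are uniformly bounded above and bounded below away from $0$, and that the ratio $h_n(x)/h_n(y)$ for $x,y$ in a common cylinder is controlled uniformly in $n$; concretely $\{\log h_n\}$ is equicontinuous for the cylinder metric. Arzel\`a--Ascoli applied to the Ces\`aro averages $\frac1N\sum_{n<N}h_n$ then extracts a uniform limit $h\in C(X)$ with $h>0$ and $L_\phi h=\lambda h$, rescaled so that $\nu(h)=1$. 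The same cylinder estimates show that $\var_n(\log h)$ is summable, so $\log h$ inherits summable variation from $\phi$ — this regularity is what makes the reduction below legitimate.

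With $(\lambda,h,\nu)$ in hand I would normalize the potential. Setting $\psi:=\phi+\log h-\log(h\circ\sigma)-\log\lambda$ and $g:=e^{\psi}$, a direct computation gives $L_\psi f=(\lambda h)^{-1}L_\phi(hf)$, so that $L_\psi 1=1$ and hence $g\in\G_c$, while $\psi$ has summable variation by the previous step. Theorem \ref{RuelleconvergenceThm} then supplies a unique $g$-measure $\mu$ for which $L_\psi^n f\to\mu(f)$ uniformly for every $f\in C(X)$. Iterating the conjugacy yields $L_\phi^n F=\lambda^n\,h\,L_\psi^n(F/h)$ for all $F\in C(X)$ (valid since $h$ is bounded away from $0$), whence $\lambda^{-n}L_\phi^n F\to \mu(F/h)\,h$ uniformly. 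Finally, $L_\psi^*(h\nu)=h\nu$ follows from $L_\phi^*\nu=\lambda\nu$, so by uniqueness $\mu=h\nu$; then $\mu(F/h)=\nu(F)$, giving exactly $\|\lambda^{-n}L_\phi^n F-\nu(F)h\|_\infty\to 0$.

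The main obstacle is the eigenfunction stage: extracting $h$ as a strictly positive continuous limit and, above all, verifying that $\log h$ has summable variation so that Theorem \ref{RuelleconvergenceThm} genuinely applies. This is precisely where the summable-variation hypothesis on $\phi$ and the topological mixing of the subshift must be used together, through the distortion estimate of the first paragraph; by contrast the fixed-point construction of $\nu$ and the final conjugation are essentially formal.
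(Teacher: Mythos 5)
First, a point of reference: the paper does not prove Theorem \ref{RuelleOp} at all; it imports it verbatim from Walters \cite{WaltRu}. So your proposal has to stand on its own merits, and most of it does: the Schauder--Tychonoff construction of $\nu$ and $\lambda=\nu(L_\phi 1)$, the distortion estimate $|\sigma_n\phi(x)-\sigma_n\phi(y)|\le\sum_{j=1}^{n}\var_j(\phi)$ on common $n$-cylinders, the extraction of a strictly positive continuous eigenfunction $h$ from Ces\`aro averages of $\lambda^{-n}L_\phi^n 1$ via Arzel\`a--Ascoli, the conjugation $L_\psi f=(\lambda h)^{-1}L_\phi(hf)$ with $\psi=\phi+\log h-\log(h\circ\sigma)-\log\lambda$, and the identification $\mu=h\nu$ are all correct and constitute the standard route. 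The gap is precisely at the point you yourself single out as the main obstacle and then claim to have settled: the assertion that $\var_n(\log h)$ is summable. Your cylinder estimates give, for $x,y$ agreeing in the first $m$ coordinates, $|\log h_n(x)-\log h_n(y)|\le V_m:=\sum_{j>m}\var_j(\phi)$, hence in the limit only $\var_m(\log h)\le V_m$. Tail sums of a summable sequence need not be summable: if $\var_j(\phi)\asymp j^{-2}$, then $\sum_m V_m\asymp\sum_m m^{-1}=\infty$. Thus nothing in your argument shows that $\psi$ (equivalently $\log g$ for $g=e^{\psi}$) has summable variation, and Theorem \ref{RuelleconvergenceThm} cannot be invoked as stated. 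This is not a cosmetic omission: the class of potentials with summable variation is genuinely not closed under the normalization $\phi\mapsto\psi$, which is exactly why later literature (Walters included) reformulates the hypothesis in terms of Birkhoff sums.

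The repair keeps your architecture but replaces the citation of Theorem \ref{RuelleconvergenceThm} by the observation that the coboundary telescopes under Birkhoff summation: $\sigma_n\psi=\sigma_n\phi+\log h-\log(h\circ\sigma^n)-n\log\lambda$, so for every $m,n$,
\[
\var_{n+m}(\sigma_n\psi)\;\le\;\var_{n+m}(\sigma_n\phi)+\var_{n+m}(\log h)+\var_m(\log h)\;\le\;3V_m,
\]
and hence $\sup_{n\ge 1}\var_{n+m}(\sigma_n\psi)\to 0$ as $m\to\infty$ even though $\sum_n\var_n(\psi)$ may diverge. This uniform control of the Birkhoff sums of the normalized potential is what the convergence argument for $g$-measures actually consumes, so you must either rerun that argument under this weaker hypothesis (essentially Walters'/Bowen's condition, as in Theorem \ref{unique2}) or prove the convergence $\|\lambda^{-n}L_\phi^n f-\nu(f)h\|_\infty\to 0$ directly from the distortion estimate, rather than reduce to a theorem whose hypothesis you have not verified.
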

The importance of Ruelle's operator theorem is made clear by the following corollary \cite{WaltRu}:
\begin{cor}[{\cite[Corollary 3.3(i)]{WaltRu}}]\label{RuelleCor}
Let $\phi\in C(X)$ have summable variation. $\phi$ has a unique equilibrium state, $\mu_\phi$, with full support, satisfying $\mu_\phi(f)=\nu(hf)$ for any $f\in C(X)$, with $\nu$ and $h$ as in Theorem \ref{RuelleOp}. Furthermore, $\mu_{\phi}$ is the unique $g$-measure of
\[g:=\frac{e^\phi h}{\lambda (h\circ \sigma)},\]
with $\lambda$ as in Theorem \ref{RuelleOp}. $\sigma$ is an exact endomorphism with respect to $\mu_\phi$, with Bernoulli natural extension. $\lambda$ is the spectral radius of $L_\phi$, and $\mathcal P_\sigma(\phi)=\log \lambda$.
\end{cor}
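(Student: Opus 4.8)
The plan is to build everything from the eigendata $(\lambda,h,\nu)$ furnished by Theorem~\ref{RuelleOp} and then to shuttle between $g$-measures and equilibrium states using Ledrappier's Theorem~\ref{Ledrappier}. First I would verify that the proposed $g:=e^{\phi}h/(\lambda\,h\circ\sigma)$ genuinely lies in $\G_c$. It is continuous since $h$ is continuous and strictly positive, and for $y\in\sigma^{-1}x$ one has $\sigma y=x$, hence $h(\sigma y)=h(x)$, so that $\sum_{y\in\sigma^{-1}x}g(y)=(\lambda h(x))^{-1}L_{\phi}h(x)=1$ by the eigenfunction relation $L_{\phi}h=\lambda h$.

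Next I would show that $\mu_{\phi}$, defined by $\mu_{\phi}(f):=\nu(hf)$, is the $g$-measure of this $g$ in the sense of Definition~\ref{gmeasfixedpointdefn}. The computation is short: from $L_{\log g}f=(\lambda h)^{-1}L_{\phi}(hf)$ one gets, for every $f\in C(X)$,
\[
\int_X L_{\log g}f\,d\mu_{\phi}=\frac1\lambda\int_X L_{\phi}(hf)\,d\nu=\frac1\lambda\int_X hf\,d(L_{\phi}^*\nu)=\int_X hf\,d\nu=\mu_{\phi}(f),
\]
using $L_{\phi}^*\nu=\lambda\nu$; since moreover $\mu_{\phi}(1)=\nu(h)=1$, this yields $L_{\log g}^*\mu_{\phi}=\mu_{\phi}$. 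By Theorem~\ref{Ledrappier}, $\mu_{\phi}$ is then $\sigma$-invariant and an equilibrium state for $\log g$, and because $g$ is normalised (so $L_{\log g}1=1$, giving leading eigenvalue $1$) the pressure satisfies $\mathcal P_{\sigma}(\log g)=0$.

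The bridge back to $\phi$ is the coboundary identity $\log g=\phi-\log\lambda+(\log h-\log h\circ\sigma)$. For every $\sigma$-invariant $m$ the coboundary integrates to zero, so $m(\log g)=m(\phi)-\log\lambda$; taking the supremum of $h_m(\sigma)+m(\,\cdot\,)$ over invariant $m$ gives $\mathcal P_{\sigma}(\phi)=\mathcal P_{\sigma}(\log g)+\log\lambda=\log\lambda$ and simultaneously shows that a measure is an equilibrium state for $\phi$ if and only if it is one for $\log g$. In particular $\mu_{\phi}$ is an equilibrium state for $\phi$, establishing the formula $\mu_{\phi}(f)=\nu(hf)$ and the pressure identity; that $\lambda$ is the spectral radius of $L_{\phi}$ then follows either from $\mathcal P_{\sigma}(\phi)=\log\lambda$ or directly from the normalised convergence $\|L_{\phi}^nf/\lambda^n-\nu(f)h\|_{\infty}\to0$ in Theorem~\ref{RuelleOp}.

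For uniqueness and the ergodic conclusions I would argue that $\log g$ has summable variation, whence Theorem~\ref{RuelleconvergenceThm} gives a unique $g$-measure (namely $\mu_{\phi}$) and Theorem~\ref{RuelleMixing} gives that $\sigma$ is exact with Bernoulli natural extension with respect to $\mu_{\phi}$. Uniqueness of the equilibrium state then follows from the coboundary equivalence above: any equilibrium state for $\phi$ is one for $\log g$, hence a $g$-measure by Theorem~\ref{Ledrappier}, hence equals $\mu_{\phi}$. Full support is Lemma~\ref{gmeassupport}(i), and uniqueness of the associated $g$-function is Lemma~\ref{gmeassupport}(ii). The main obstacle is precisely the summable-variation claim for $\log g$: since Theorem~\ref{RuelleOp} records only $h\in C(X)$, I would need to recover, from the construction of $h$ as the uniform limit of $L_{\phi}^n1/\lambda^n$, that $\log h$ inherits summable variation from $\phi$, so that the coboundary $\log h-\log h\circ\sigma$, and hence $\log g$, has summable variation. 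This is the one place where the bare continuity statement of Theorem~\ref{RuelleOp} must be upgraded by appealing to its proof.
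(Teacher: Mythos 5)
The paper does not prove this corollary at all; it is imported verbatim from \cite{WaltRu}, so your proposal has to be judged on its own. Most of it is the standard (and correct) Walters argument: the verification that $g:=e^{\phi}h/(\lambda\,h\circ\sigma)\in\G_c$ via $L_\phi h=\lambda h$, the identity $L_{\log g}f=L_\phi(hf)/(\lambda h)$ together with $L_\phi^*\nu=\lambda\nu$ showing that $d\mu_\phi=h\,d\nu$ is a fixed point of $L_{\log g}^*$, and the coboundary identity $\log g=\phi-\log\lambda+\log h-\log h\circ\sigma$ transferring equilibrium states between $\phi$ and $\log g$ are all sound.

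The genuine gap is the one you flagged yourself, and the repair you propose for it cannot work. To invoke Theorems \ref{RuelleconvergenceThm} and \ref{RuelleMixing} you need $\log g$, equivalently $\log h$, to have summable variation; but the construction of $h$ in the proof of Ruelle's Operator Theorem only yields the tail bound $\var_n(\log h)\le\sum_{j>n}\var_j(\phi)$, and tails of a summable sequence need not be summable (take $\var_j(\phi)\asymp j^{-2}$, whose tails are $\asymp j^{-1}$). So summable variation of $\phi$ does not pass to $\log h$, and ``appealing to the proof'' of Theorem \ref{RuelleOp} cannot deliver what you need. The step closes instead \emph{without any} variation estimate on $\log h$: since $h$ is continuous and strictly positive on the compact space $X$, it is bounded away from $0$, so $L_{\log g}^nf=L_\phi^n(hf)/(\lambda^nh)$ together with the convergence clause of Theorem \ref{RuelleOp} gives $\|L_{\log g}^nf-\nu(hf)\|_\infty\to0$ for every $f\in C(X)$. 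Hence any $g$-measure $\mu$ satisfies $\mu(f)=\mu(L_{\log g}^nf)\to\nu(hf)$, i.e.\ $\mu=\mu_\phi$, which is the uniqueness you wanted; and this uniform convergence is exactly the property on which the proofs of Theorems \ref{RuelleconvergenceThm} and \ref{RuelleMixing} run, which is how Walters gets exactness and the Bernoulli natural extension for $\mu_\phi$. (Alternatively, staying inside the toolkit quoted in the paper: $\log h$ is bounded, the coboundary telescopes in Birkhoff sums, so $\log g$ inherits the Bowen condition from $\phi$ and Theorem \ref{unique2} gives uniqueness of the $g$-measure, though not the mixing statements.) A smaller wrinkle of the same kind: your assertion $\mathcal P_\sigma(\log g)=0$ ``because the leading eigenvalue is $1$'' again presupposes that the operator theory applies to $\log g$; it is safer to get $\mathcal P_\sigma(\phi)=\log\lambda$ directly from the classical formula $\mathcal P_\sigma(\phi)=\lim_n\frac1n\log\|L_\phi^n1\|_\infty$ on subshifts combined with the convergence in Theorem \ref{RuelleOp}.
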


In a subsequent paper \cite{WaltersBow} Walters proved a strengthening of the above, for potentials satisfying the Bowen condition ($\sup_{n\geq 1}\var_{n+k}(\phi\circ \sigma^n)<\infty$ for some $k\in \N$) and not just finite summability.
\begin{thm}\label{unique2}
	If $g\in \G_c\cap \bow(X,\sigma)$ then there exists a unique $g$-measure $\mu$.
\end{thm}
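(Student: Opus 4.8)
The plan is to separate existence from uniqueness; only uniqueness uses the Bowen condition. For existence I would note that since $g\in\G_c$ is normalised we have $L_{\log g}\one=\one$, so the dual $L_{\log g}^*$ maps the weak-$*$ compact convex set $P(X)$ continuously and affinely into itself (positivity is preserved, and $L_{\log g}^*\mu(X)=\int L_{\log g}\one\,d\mu=1$). Schauder--Tychonoff then yields a fixed point, which is a $g$-measure by Definition \ref{gmeasfixedpointdefn}; this step needs only continuity and normalisation of $g$, not the Bowen condition. For uniqueness I would first invoke Ledrappier's Theorem \ref{Ledrappier} to replace ``$g$-measure'' by ``equilibrium state for $\log g$'', so that the goal becomes showing that $\log g$ has exactly one equilibrium state. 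Throughout I use that $g>0$ continuous on the compact space $X$ forces $\inf g>0$, whence $\log g\in C(X)$ and $g\in\bow(X,\sigma)\iff\log g\in\bow(X,\sigma)$, as $\log$ is bi-Lipschitz on $[\inf g,\sup g]$.

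The heart of the argument is a uniform Gibbs estimate extracted from the Bowen condition. That condition provides constants $k\in\N$ and $D<\infty$ with $|(\sigma_n\log g)(x)-(\sigma_n\log g)(y)|\le D$ whenever $x,y$ agree in their first $n+k$ coordinates; the finite offset $k$ only affects constants below. For any $g$-measure $\mu$ the fixed-point relation $L_{\log g}^*\mu=\mu$ gives, for each cylinder $[x_0,\dots,x_{n-1}]$,
\[\mu([x_0,\dots,x_{n-1}])=\int_X L_{\log g}^n\one_{[x_0,\dots,x_{n-1}]}\,d\mu=\int_{A_n}e^{(\sigma_n\log g)(x_0\cdots x_{n-1}y)}\,d\mu(y),\]
where $A_n$ is the set of admissible continuations $y$ of $x_0\cdots x_{n-1}$. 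Recoding to a one-step subshift, $A_n$ depends only on the symbol $x_{n-1}$, and topological mixing together with full support (Lemma \ref{gmeassupport}(i)) bounds $\mu(A_n)$ below by a constant $c_0>0$ independent of the cylinder and of $\mu$. Combining the displayed identity with the Bowen bound then yields a two-sided Gibbs inequality $C^{-1}\le\mu([x_0,\dots,x_{n-1}])\,e^{-(\sigma_n\log g)(w)}\le C$ for every $w\in[x_0,\dots,x_{n-1}]$, with $C$ depending only on $D$ and $c_0$, hence uniform over all cylinders and over all $g$-measures.

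From here uniqueness follows by a soft argument. If $\mu_1,\mu_2$ are two $g$-measures, the uniform Gibbs bound gives $C^{-2}\le\mu_1([a])/\mu_2([a])\le C^{2}$ for every cylinder $[a]$, so by martingale convergence on the cylinder filtration $\mu_1$ and $\mu_2$ are mutually absolutely continuous with densities bounded above and below. On the other hand, by the variational principle together with the affinity and upper semicontinuity of the entropy map on the subshift of finite type, the set of equilibrium states for $\log g$ is a face of $P_\sigma(X)$, so its extreme points are ergodic. Since distinct ergodic $\sigma$-invariant measures are mutually singular, there can be at most one ergodic $g$-measure; the ergodic (Choquet) decomposition then forces the whole face to reduce to this single point, which is the desired uniqueness.

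The main obstacle is the Gibbs estimate of the second paragraph under the mere Bowen condition rather than summable variation: summability of variations is precisely what is no longer available, so one cannot run the clean equicontinuity/contraction argument behind Theorem \ref{RuelleconvergenceThm}, and must instead lean on the bounded-distortion bound $D<\infty$ and a careful, mixing-based lower estimate on the mass $\mu(A_n)$ of admissible continuations. A heavier alternative would be to establish the full Ruelle operator theorem (uniform convergence of $L^n_{\log g}$, as in Theorem \ref{RuelleOp}) under the Bowen condition and then deduce uniqueness as in Corollary \ref{RuelleCor}; the Gibbs-plus-ergodic-decomposition route above is lighter and suffices when only existence and uniqueness of the $g$-measure are sought.
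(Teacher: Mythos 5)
First, a point of order: the paper itself gives no proof of Theorem \ref{unique2} --- it is quoted from Walters \cite{WaltersBow} (building on \cite{WaltersDist}), and the paper notes that in the subshift setting it goes back to Bowen \cite{BowenBookOrig}. So your proposal can only be compared with those cited arguments. Your route --- derive a uniform Gibbs estimate for every $g$-measure directly from the fixed-point identity $\mu([x_0,\dots,x_{n-1}])=\int L^n_{\log g}\one_{[x_0,\dots,x_{n-1}]}\,d\mu$ together with the Bowen distortion bound, conclude that any two $g$-measures are mutually absolutely continuous with bounded densities, and then use Ledrappier's Theorem \ref{Ledrappier} to view the $g$-measures as a nonempty compact face of $P_\sigma(X)$ whose extreme points are ergodic, hence (by mutual singularity of distinct ergodic measures) a single point --- is essentially Bowen's Gibbs-measure strategy rather than Walters' strategy, which establishes a convergence theorem for $L^n_{\log g}$ akin to Theorem \ref{RuelleOp}. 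It is genuinely lighter, since it never proves operator convergence, and its ingredients (full support from Lemma \ref{gmeassupport}, affinity and upper semicontinuity of entropy on an SFT, Schauder--Tychonoff for existence) are standard. The individual steps check out: the preimage bookkeeping is exactly right for a one-step SFT, the offset $k$ costs only an additive $2k\|\log g\|_\infty$ in the distortion constant, and for each fixed $\mu$ the mass $\mu(A_n)$ is bounded below because $A_n$ is one of finitely many nonempty open sets determined by the terminal symbol.

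There is, however, one genuine error, at the point where you translate the hypothesis. You claim $g\in\bow(X,\sigma)\iff\log g\in\bow(X,\sigma)$ ``as $\log$ is bi-Lipschitz on $[\inf g,\sup g]$''. Bi-Lipschitzness does give $\var_n(\log g)\asymp\var_n(g)$, so it transfers summable variation; but the Bowen condition bounds the variation of the Birkhoff sums $\sigma_n\phi$, and there is no pointwise functional relation between $\sigma_n\log g$ and $\sigma_n g$ (in particular $\sigma_n\log g\neq\log\sigma_n g$). Bounded oscillation of $\sum_{j<n}\bigl(g(\sigma^jx)-g(\sigma^jy)\bigr)$ does not control $\sum_{j<n}\bigl(\log g(\sigma^jx)-\log g(\sigma^jy)\bigr)$: by the mean value theorem the latter is a reweighted version of the former, with weights varying in $[1/\sup g,\,1/\inf g]$, and the cancellations that make the first sum bounded need not survive the reweighting. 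So this equivalence is unjustified, and there is no reason for it to hold in general. The repair is interpretive rather than structural: the hypothesis, as in \cite{WaltersBow} and \cite{BowenBookOrig}, should be read as ``$\log g$ satisfies the Bowen condition'', which is exactly what your Gibbs estimate uses; with that reading the rest of your proof goes through. Two smaller points: the Schauder--Tychonoff fixed point must also be checked to be $\sigma$-invariant before Definition \ref{gmeasfixedpointdefn} applies (one line: $L_{\log g}(f\circ\sigma)=f\cdot L_{\log g}\one=f$, hence $\int f\circ\sigma\,d\mu=\int f\,d\mu$), and the lower bound $c_0$ on $\mu(A_n)$ need not be uniform in $\mu$ --- your uniqueness argument only requires a per-measure constant, which is what Lemma \ref{gmeassupport} actually provides; uniformity over all $g$-measures would need an extra compactness argument that you neither give nor need.
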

The proof of Theorem \ref{unique2} relies on tools developed in an earlier paper of his, \cite{WaltersDist}, which deals with equilibrium states of more general compact metric spaces and maps on them which expand distances. In fact Theorem \ref{unique2} itself is applicable to more general spaces and maps than just subshifts of finite type. It is also actually part of a convergence theorem akin to Ruelle's Operator Theorem. In our restricted case it was actually proven earlier by Rufus Bowen in his monograph \textit{Ergodic Theory and the Ergodic Theory of Anosov Diffeomorphisms} \cite{BowenBookOrig}.

\bigskip
We end his section by stating a known result that any Markov measure is a $g$-measure (see \cite{Keane}). To be more precise,
let $(X,\sigma)$ be as above (with $|\mathcal A|=n$), and let $P$ be a transition matrix with a strictly positive left stationary distribution $\pi$. Let $Y$ be the subshift of finite type induced on $X$ by the Markov chain. By this we mean that $y\in Y$ if and only if $P_{y_i,y_{i+1}}>0$ for all $i\in \N_0$. Let $\mu$ be the Markov measure on $Y$ generated by $(P,\pi)$. We know that $\mu$ is $\sigma$-invariant. As Keane shows in \cite{Keane} we can explicitly find the $g$-function corresponding to $\mu$:
\begin{prop}\label{MarkG}
	The function $g_P:Y\to \R$ defined by
	\[g_P(x):=\frac{\pi_{x_0}P_{x_0,x_1}{\tiny {\tiny }}}{\pi_{x_1}}=\frac{\mu([x_0,x_1])}{\mu([x_1])}.\] is an element of $\G_c$, and $\mu$ is a $g$-measure corresponding to the $g$ function $g_P$.
	
\end{prop}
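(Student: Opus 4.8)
The plan is to establish the two claims in turn, treating $(Y,\sigma)$ as the relevant subshift of finite type. To see that $g_P\in\G_c$, I would first observe that $g_P(x)$ depends only on the coordinates $x_0,x_1$, so it is constant on each $2$-cylinder $[a_0,a_1]$; as these cylinders are clopen, $g_P$ is locally constant and hence continuous on $Y$. Strict positivity on $Y$ is immediate, since $\pi_a>0$ for every $a\in\mathcal A$ and $P_{x_0,x_1}>0$ whenever $x\in Y$. The one substantive point is the normalisation condition defining $\G$. Here I would note that the $\sigma$-preimages of $x=(x_0,x_1,\dots)\in Y$ are precisely the sequences $(a,x_0,x_1,\dots)$ with $P_{a,x_0}>0$, so that
\[\sum_{y\in\sigma^{-1}x}g_P(y)=\sum_{a\in\mathcal A}\frac{\pi_a P_{a,x_0}}{\pi_{x_0}}=\frac{(\pi P)_{x_0}}{\pi_{x_0}}=1,\]
where the terms with $P_{a,x_0}=0$ vanish (so the sum may run over all of $\mathcal A$) and the final equality uses that $\pi$ is a left stationary distribution, $\pi P=\pi$. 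Since the summands are nonnegative and sum to $1$, each lies in $[0,1]$, confirming $g_P\in\G_c$.

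For the second claim, $\mu$ is already known to be $\sigma$-invariant, so by Proposition \ref{gmeasnewequiv}(ii) it suffices to verify
\[\int_Y f\,d\mu=\int_Y\sum_{y\in\sigma^{-1}x}f(y)\,g_P(y)\,d\mu(x)\]
for all $f\in L^1(\mu)$. Both sides are linear and continuous in $f$, and cylinder indicators generate the Borel $\sigma$-algebra, so I would reduce to $f=\one_{[a_0,\dots,a_k]}$. For such an $f$ the only preimage surviving the inner sum is $y=(a_0,a_1,\dots,a_k,x_k,\dots)$, which forces $x\in[a_1,\dots,a_k]$ and gives
\[\sum_{y\in\sigma^{-1}x}f(y)\,g_P(y)=\frac{\pi_{a_0}P_{a_0,a_1}}{\pi_{a_1}}\,\one_{[a_1,\dots,a_k]}(x).\]
Integrating and substituting $\mu([a_1,\dots,a_k])=\pi_{a_1}P_{a_1,a_2}\cdots P_{a_{k-1},a_k}$, the factor $\pi_{a_1}$ cancels and the remaining product is exactly $\pi_{a_0}P_{a_0,a_1}\cdots P_{a_{k-1},a_k}=\mu([a_0,\dots,a_k])$, which is the left-hand side.

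The argument is a direct computation, so there is no deep obstacle; the care needed is concentrated in the second step. The delicate bookkeeping is correctly identifying the single preimage contributing to $\sum_{y\in\sigma^{-1}x}f(y)g_P(y)$ and aligning the indices so that the cancellation of $\pi_{a_1}$ and the Markov product reconstitute $\mu([a_0,\dots,a_k])$ cleanly. This, together with the routine density argument extending the cylinder identity to all of $L^1(\mu)$, completes the proof.
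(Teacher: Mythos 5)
Your proof is correct, and since the paper states Proposition \ref{MarkG} without proof (deferring to Keane \cite{Keane}), your argument supplies exactly the standard verification that reference contains: continuity and normalisation of $g_P$ via $\pi P=\pi$, then the defining integral identity checked on cylinders and extended by a routine generating-class argument. The index bookkeeping in your cylinder computation (the single surviving preimage, cancellation of $\pi_{a_1}$, and reconstitution of the Markov product $\mu([a_0,\dots,a_k])$) is accurate, so nothing further is needed.
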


\section{Random $\beta$-transformations and Markov partitions}\label{MarkovChain}
In this section, we examine the collection of $\beta$'s for which the dynamics of the $K_{\beta}$ transformation can be described by a topological Markov chain. To this end, let $S=\bigcup_{1 \le k \le \lbetar} \interior (S_k)$, where $\interior(S_k)$ denotes the interior of the interval $S_k$.
\begin{defn}\label{d:B}
Let $B \subset (1,\infty)$ be the set of non-integers $ \beta >1$ with the following two properties:
\begin{itemize}
\item[(B1)] the set
\[ F=\Big\{\pi \Big(K^n\Big(\omega, \frac{k}{\beta}\Big)\Big), \pi \Big(K^n\Big(\omega, \frac{\lfloor \beta \rfloor}{\beta(\beta-1)} + \frac{k}{\beta}\Big)\Big) \, : \, k \in \{0, \ldots, \lbetar \}, \, n \ge 0,\, \omega \in \Omega\Big\}\]
is finite, and,
\item[(B2)] $F \cap S = \emptyset$.
\end{itemize}
\end{defn}

So the values of $\beta$ that satisfy the conditions of Definition~\ref{d:B} are those values for which the orbits of all the endpoints of the intervals $E_k$ and $S_k$ are ultimately periodic and do not enter $S$ under iterations of the random map $K_{\beta}$ with any possible driving sequence $\omega$.  In \cite{KarmadeVries} it was shown that the dynamics of $K_{\beta}$ can be described by a topological Markov chain, which we will quickly summarise, and refer the reader to \cite{KarmadeVries, KarmaInv} for further details.
We start by considering the partition of
\[\mathcal E=\{E_0,S_1,E_1,\dots S_{\lfloor\beta\rfloor},E_{\lfloor\beta\rfloor}\}\]
of $I_{\beta}$. We refine $\mathcal E$ with the orbits of $1$ and $\frac{\lfloor\beta\rfloor}{\beta-1}-1$ under $T_\beta$, i.e. by elements of $F$. We arrange the endpoints of each element of $\mathcal E$ along with $T_\beta^i 1$ and $T_\beta^i(\lfloor\beta\rfloor/(\beta-1)-1)$ in ascending order, and then form a new partition 
\[\mathcal C:=\{C_0,\dots, C_K\}.\] 
with the intervals determined by these endpoints. To decide the openness of these intervals, we choose them to satisfy, for all $i\in\{0,\dots,n-2\}$:
\begin{itemize}
	\item $T_\beta^i1\in C_j$ if and only if $T_\beta^i1$ is a left endpoint of $C_j$.
	\item $T_\beta^i(\frac{\lfloor\beta\rfloor}{\beta-1}-1)\in C_j$ if and only if $T_\beta^i(\frac{\lfloor\beta\rfloor}{\beta-1}-1)$ is a right endpoint of $C_j$.
\end{itemize}
The partition ${\mathcal C}$ has the following properties, which we list without proof.
\begin{prop}\label{partitionC}
	The following properties hold for the partition $\mathcal C$:
	\begin{itemize}
		\item[(i)] $C_0=[0,\lfloor\beta\rfloor/(\beta-1)-1]$ and $C_K=[1,\lfloor\beta\rfloor/(\beta-1)]$.
		\item[(ii)] We can write, for any $i\in\{0,\dots,\lfloor\beta\rfloor\}$, $E_i=\bigcup_{j\in M_i} C_j$ where $\{M_0,\dots M_{\lfloor\beta\rfloor}\}$ is a disjoint collection of subsets of $\{0,\dots,K\}$. Furthermore $|M_{i}|=|M_{\lfloor\beta\rfloor-i}|$.
		\item[(iii)] To each switch-region $S_i$ there corresponds a single $s_i\in\{0,\dots,K\}\backslash\bigcup_{k=0}^{\lfloor\beta\rfloor}M_k$ such that $S_i=C_{s_i}$.
		\item[(iv)] If, for some $i,j$, $C_j\subset E_i$ then $T_\beta(C_j)=S_\beta(C_j)=\bigcup_{j=1}^lC_{i_j}$ for some $l\in\{0,\dots, K\}$. Furthermore  $T_\beta(C_{K-j})=\bigcup_{i=1}^lC_{K-j_i}$.
		\item[(v)] If $C_j=S_i$ for some $i,j$, then  $T_\beta (C_j)=C_0$ and $S_\beta(C_j)=C_K$.
	\end{itemize} 
\end{prop}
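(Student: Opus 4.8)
My approach rests on the reflection $R\colon I_\beta\to I_\beta$, $R(x)=\fb-x$, an order-reversing involution of $I_\beta$. A short computation gives $R(\overline{E_k})=\overline{E_{\lbetar-k}}$, $R(\overline{S_k})=\overline{S_{\lbetar-k+1}}$, and the branch conjugacy $R\circ T_k=T_{\lbetar-k}\circ R$. Since $R$ permutes the generating endpoints $k/\beta$ and $\frac{\lbetar}{\beta(\beta-1)}+\frac k\beta$ and carries the branch family $\{T_k\}$ to itself, it preserves the orbit set $F$; moreover (B2) keeps the orbits of $1$ and of $\fb-1=R(1)$ out of $\interior(S)$, so these two distinguished orbits are unambiguous and are interchanged by $R$, whence the left/right openness convention is itself $R$-symmetric. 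I would record first the consequence $R(C_j)=C_{K-j}$, as it halves the work below.

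Properties (ii) and (iii) are then quick. Because $F$ contains every endpoint of $\mathcal{E}$ (the case $n=0$), $\mathcal{C}$ refines $\mathcal{E}$, so each $E_i$ is a union of atoms and the index sets $M_i$ are disjoint since the $E_i$ are; the identity $|M_i|=|M_{\lbetar-i}|$ is exactly the bijection $C_j\mapsto C_{K-j}$ restricted to $E_i\to E_{\lbetar-i}$. For (iii), $\interior(S_i)\subset S$ and $F\cap S=\emptyset$ by (B2), so no breakpoint lies in $\interior(S_i)$, forcing $S_i$ to be a single atom $C_{s_i}$ with $s_i\notin\bigcup_k M_k$.

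For (i) the claim is that $\fb-1$ is the least positive breakpoint. It is a breakpoint since $T_{\lbetar}$ of the left endpoint of $E_{\lbetar}$ equals $\fb-1$, so $\fb-1\in F$. Applying (B2) to the point $1\in F$ shows $1\notin\interior(S)$, hence $1\in\overline{E_{\lbetar}}$, which is equivalent to $\fb-1\le 1/\beta$; thus $\fb-1$ lies below every positive endpoint of $\mathcal{E}$. The uniform computation $T_k(\text{left endpoint of }E_k)=\fb-1$ for $1\le k\le\lbetar$ shows each branch $T_k$ with $k\ge 1$ maps its domain into $[\fb-1,\fb]$; consequently a breakpoint $m$ with $0<m<\fb-1$ could only arise as $T_0(x)=\beta x$ from a strictly smaller positive element $x=m/\beta\in F$, and finiteness of $F$ rules this out. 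Hence $\min\bigl(F\cap(0,\fb]\bigr)=\fb-1$, giving $C_0=[0,\fb-1]$, and $R$ yields $C_K=[1,\fb]$.

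Finally, (v) is the direct endpoint computation: on $S_i$ the two affine branches of $K$ satisfy $T_i(S_i)=[0,\fb-1]=C_0$ and $T_{i-1}(S_i)=[1,\fb]=C_K$, which is the assertion. For (iv), on $C_j\subset E_i$ the map is the single branch $T_i$, so $T_i(C_j)$ is an interval whose endpoints are $T_i$ of the endpoints of $C_j$; since every breakpoint lies in $F$, since on $\overline{E_i}$ the branch $T_i$ agrees with an admissible branch of $K$, and since $F$ is forward $K$-invariant, these images are again breakpoints, so $T_i(C_j)=\bigcup_l C_{i_l}$ is a union of whole atoms, and the reflected statement $T_\beta(C_{K-j})=\bigcup_l C_{K-j_l}$ follows from $R\circ T_i=T_{\lbetar-i}\circ R$. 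The one genuinely delicate point, underlying both (i) and (iv), is the boundary bookkeeping: one must verify that when an endpoint of $C_j$ meets $\partial E_i$ on a shared wall with a switch region, the deterministic branch $T_i$ still coincides with one of the two admissible $K$-branches there, so its image stays in $F$. This is precisely where (B2) does the work, by preventing orbit points from entering the interiors of the switch regions; the remaining content is the affine endpoint arithmetic above.
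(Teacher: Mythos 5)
The paper states Proposition \ref{partitionC} \emph{without proof} (it defers to \cite{KarmadeVries, KarmaInv}), so there is no internal argument to compare yours against; what you have written is a self-contained proof, and it is essentially correct. Your organizing device --- the order-reversing involution $R(x)=\fb-x$, with $R\circ T_k=T_{\lbetar-k}\circ R$, $R(\overline{E_k})=\overline{E_{\lbetar-k}}$, $R(\overline{S_k})=\overline{S_{\lbetar-k+1}}$, hence $R(F)=F$ and $R(C_j)=C_{K-j}$ --- is exactly the right symmetry to exploit: it delivers $C_K$ in (i), the cardinality identity in (ii), and the reflected half of (iv) for free. The substantive steps are also sound: (B2) forces $\fb-1\le 1/\beta$, descent plus finiteness of $F$ (B1) shows no positive element of $F$ lies in $(0,\fb-1)$, forward $K$-invariance of $F$ together with admissibility of $T_i$ on $\overline{E_i}$ gives (iv), and (v) is the stated endpoint computation.

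Three places deserve tightening, none fatal. First, ``$1\notin\interior(S)$, hence $1\in\overline{E_{\lbetar}}$'' needs the small intermediate observation that $1>\lbetar/\beta$ (since $\beta>\lbetar$), so that $1$ lies in $\overline{S_{\lbetar}}\cup E_{\lbetar}$ to begin with; it cannot equal the left endpoint $\lbetar/\beta$ of $S_{\lbetar}$ (else $\beta\in\Z$), and (B2) then excludes $\interior(S_{\lbetar})$, leaving $\overline{E_{\lbetar}}$. Second, in the descent for (i), the claim that each $T_k$ with $k\ge 1$ maps ``its domain'' into $[\fb-1,\fb]$ is literally false: as a branch of $K_\beta$, $T_k$ also acts on $S_k$, which it maps onto $[0,\fb-1]$. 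The argument survives because, by (B2), orbit points meet $\overline{S_k}$ only in $\partial S_k$, and both admissible branches send $\partial S_k$ into $\{0,1,\fb-1,\fb\}$, so no breakpoint in $(0,\fb-1)$ can arise from a switch region and the only remaining source is $T_0$, as you claim; you flag this issue in your closing paragraph, but it belongs inside the descent itself. Third, the half-open/closed bookkeeping in (ii) and (iv) is asserted rather than verified; the verification is routine (an admissible image of a point $T_\beta^i 1$ is again a point of that orbit, likewise for $\fb-1$, so the left/right endpoint conventions propagate under the branches and are interchanged by $R$), but this is the only place the paper's openness convention is actually used, so it should be written out.
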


\bigskip
On $\Omega\times I_{\beta}$ we consider the partition 
\[\mathcal P:=\left\{\Omega\times C_j:j\in\bigcup_{k=0}^{\lfloor\beta\rfloor}M_k\right\}\bigcup\{\{\omega_0=i\}\times S_j:i\in\{0,1\},~j\in\{1,\dots \lfloor\beta\rfloor\}\}.\]
We can now define the $K+1\times K+1$ adjacency matrix $A=(a_{i,j})$ defining the subshift of finite type underlying \index{Markov Partition}$K_\beta$:
\begin{equation}
a_{i,j}:=\begin{cases}1 & \text{if}~i\in\bigcup_{k=0}^{\lfloor\beta\rfloor} M_k~\text{and}~C_j\cap T_\beta C_i=C_j \\
0 & \text{if}~i\in\bigcup_{k=0}^{\lfloor\beta\rfloor} M_k~\text{and}~C_i\cap T_\beta^{-1} C_j=\emptyset\\
1 & \text{if}~i\in\{0,\dots,K\}\backslash\in\bigcup_{k=0}^{\lfloor\beta\rfloor} M_k~\text{and}~j\in\{0,K\}\label{adjacencymatrix}\\
0 & \text{if}~i\in\{0,\dots,K\}\backslash\in\bigcup_{k=0}^{\lfloor\beta\rfloor} M_k~\text{and}~j\notin\{0,K\}.\end{cases}
\end{equation}
Let $Y$ be the subshift of finite type determined by $A$. $(Y,\sigma)$ is topologically mixing, because $A$ is irreducible, and there is always a positive entry on the main diagonal, namely $a_{0,0}=1$, which means that $A$ is aperiodic. An irreducible, aperiodic matrix is primitive and a subshift of finite type with a primitive matrix is topologically mixing. We want to define a map $\psi:Y\to \Omega\times I_{\beta}$, so that the dynamics of $K_\beta$ are essentially represented by the dynamics of the left shift on $Y$, $\sigma_Y$. For convenience we denote by $s_i$ the states corresponding to the switch regions $S_i$, for $i\in \{1,\dots,\lfloor\beta\rfloor\}$. We first define how $\psi$ maps onto the second coordinate: To each $y\in Y$ we associate $(e_i(y))\in\{0,\dots,\lfloor\beta\rfloor\}^\N$ given by
\begin{equation}
e_i(y)=\begin{cases}j&\text{if}~y_i\in M_j\\
j&\text{if}~y_i=s_j~\text{and}~y_{i+1}=0\\
j-1&\text{if}~y_i=s_j~\text{and}~y_{i+1}=K.
\end{cases}\label{ytoemarkov}
\end{equation}
We can now associate an $x_y$ to $y$ by setting
\begin{equation}
x_y:=\sum_{i=1}^\infty\frac{e_i(y)}{\beta^i}\label{xy}.
\end{equation}
Unfortunately we cannot associate a unique $\omega_y$ to $y\in Y$ if $y$ does not contain infinitely many entries of the form $s_i$. For that reason we define 
\[Y':=\{(y_i)\in Y:y_i\in\{s_1,\dots,s_{b_1}\}~\text{for infinitely many}~i'\text{s}\}.\label{pageY'}\]
For $y\in Y'$ we define $n_i(y)$ to be the $i$-th time that $y_j\in\{s_1,\dots s_{\lfloor\beta\rfloor}\}$ for some $j\in\N$. That is $y_j\in \{s_1,\dots,s_{\lfloor\beta\rfloor}\}$ if and only if $j=n_i(y)$ for some $i\in \N$, and $n_i(y)<n_{i+1}(y)$ for all $i\in \N$. With this we can associate a $\omega^y\in\Omega$ with $y$ by
\begin{equation}\label{omegay}
\omega_i^y=\begin{cases}1&\text{if}~y_{n_i(y)+1}=0\\
0& \text{if}~y_{n_i(y)+1}=K.\end{cases}
\end{equation}
Note that this is well defined by Proposition \ref{partitionC} $(v)$. Finally we can define $\psi:Y'\to \Omega\times[0,\lfloor\beta\rfloor/(\beta-1)]$ by $\psi(y)=(\omega^y,x_y)$, with $x_y$ and $\omega^y$ defined by \eqref{xy} and \eqref{omegay} respectively. We can extend the domain of $\psi$ to $Y$ by defining, for $y\in Y\backslash Y'$, $\psi(y)=(\omega,x_y)$, where $\omega_i$ is defined as in definition \ref{omegay} for all $i$ where $n_i(y)$ is defined (this will be just be the first finite number of entries). For the other $i$ we set $\omega_i=1$. The set $Y'$ has the following properties (see \cite{KarmadeVries}).
\begin{lem}\label{Markovdynlem1}
For $y'\in Y$ the following hold:
\begin{itemize}
\item[(i)] If $y_1=k$ for $k\in \bigcup_{i=0}^{\lfloor\beta\rfloor}M_i$ then $x_y\in C_k$.
\item[(ii)]  If, for some $i\in\{1,\dots,\lfloor\beta\rfloor\}$, $y_1=s_i$, $y_2=0$ then $x_y\in S_i$ and $\omega_1=1$
\item[(iii)]  If, for some $i\in\{1,\dots,\lfloor\beta\rfloor\}$, $y_1=s_i$, $y_2=K$ then $x_y\in S_i$ and $\omega_1=0$.
\item[(iv)] $\psi$ conjugates $K_\beta$ and $\sigma_Y$ restricted to $Y'$. That is for any $y\in Y'$
\[K_\beta\circ \psi(y)=\psi\circ \sigma_Y(y).\]	
\end{itemize}	
\end{lem}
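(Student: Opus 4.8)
The plan is to reduce parts (i)–(iii) to the single geometric statement that $x_y\in C_{y_1}$ for every $y\in Y$, to establish this by a nested-interval argument exploiting the Markov property of the partition $\mathcal C$, and then to obtain (iv) by matching the two coordinates of $\psi$. To begin, the $\omega_1$-assertions in (ii) and (iii) are immediate from \eqref{omegay}: if $y_1=s_i$ then $n_1(y)=1$, so $\omega_1^y=1$ exactly when $y_2=0$ and $\omega_1^y=0$ exactly when $y_2=K$. Granting the claim below, the membership $x_y\in C_{y_1}$ becomes $x_y\in C_k$ when $y_1=k\in\bigcup_j M_j$ (part (i)) and $x_y\in C_{s_i}=S_i$ when $y_1=s_i$ (the interval assertions of (ii),(iii), using Proposition \ref{partitionC}(iii)). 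Everything therefore rests on the

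\smallskip
\noindent\textbf{Claim.} $x_y\in C_{y_1}$ for all $y\in Y$.

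\smallskip
To each state attach its affine branch: for a non-switch state $i\in M_j$ put $\tau_i=T_j$, and for a switch state $s_j$ use the two branches $T_j$ (carrying $C_{s_j}$ onto $C_0$) and $T_{j-1}$ (carrying $C_{s_j}$ onto $C_K$), the choice being dictated by whether the successor symbol is $0$ or $K$; in every case $\tau_{y_1}(x)=\beta x-e_1(y)$ by \eqref{ytoemarkov}. By Proposition \ref{partitionC}(iv),(v) each branch maps its interval affinely \emph{onto a union of whole elements of $\mathcal C$}, and by the construction \eqref{adjacencymatrix} of $A$ one has $a_{i,j}=1$ precisely when $C_j\subseteq\tau_i(C_i)$. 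Consequently, for an admissible word $(y_1,\dots,y_n)$ the set
\[
J_{y_1\cdots y_n}:=\{x\in C_{y_1}:\tau_{y_1}x\in C_{y_2},\ \tau_{y_2}\tau_{y_1}x\in C_{y_3},\ \dots,\ \tau_{y_{n-1}}\cdots\tau_{y_1}x\in C_{y_n}\}
\]
is a nonempty closed subinterval of $C_{y_1}$ mapped by $\tau_{y_{n-1}}\cdots\tau_{y_1}$ bijectively onto $C_{y_n}$, and the $J_{y_1\cdots y_n}$ nest as $n$ grows. Since each $\tau$ expands by the factor $\beta>1$, we get $|J_{y_1\cdots y_n}|\le\operatorname{diam}(I_\beta)\,\beta^{-(n-1)}\to0$, so $\bigcap_nJ_{y_1\cdots y_n}$ is a single point $x^\ast\in C_{y_1}$. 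Applying the branches $n$ times gives $x^\ast=\sum_{i=1}^{n}e_i(y)\beta^{-i}+\beta^{-n}r_n$ with $r_n=\tau_{y_n}\cdots\tau_{y_1}x^\ast\in I_\beta$; letting $n\to\infty$ and comparing with \eqref{xy} yields $x^\ast=x_y$, proving the Claim.

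For (iv), fix $y\in Y'$ and write $\psi(y)=(\omega^y,x_y)$. For the second coordinate, parts (i)–(iii) identify the branch of $K_\beta$ taken at $(\omega^y,x_y)$, and in each case the defining formula for $K_\beta$ subtracts exactly $e_1(y)$, so $\pi_2(K_\beta(\omega^y,x_y))=\beta x_y-e_1(y)$; since $e_i(\sigma_Y y)=e_{i+1}(y)$, \eqref{xy} gives
\[
x_{\sigma_Y y}=\sum_{i=1}^\infty\frac{e_{i+1}(y)}{\beta^i}=\beta\Big(x_y-\frac{e_1(y)}{\beta}\Big)=\beta x_y-e_1(y),
\]
so the second coordinates agree. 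For the first coordinate we track switch times. If $y_1\in\bigcup_j M_j$ then $K_\beta$ fixes $\omega^y$, while $n_i(\sigma_Y y)=n_i(y)-1$ gives $\omega_i^{\sigma_Y y}=\omega_i^y$ for all $i$, hence $\omega^{\sigma_Y y}=\omega^y$. If $y_1=s_i$ then $K_\beta$ applies $\sigma$ to $\omega^y$, while $n_i(\sigma_Y y)=n_{i+1}(y)-1$ gives $\omega_i^{\sigma_Y y}=\omega_{i+1}^y=(\sigma\omega^y)_i$, hence $\omega^{\sigma_Y y}=\sigma\omega^y$. In both cases $K_\beta\circ\psi(y)=\psi\circ\sigma_Y(y)$.

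The main obstacle is the Claim, and specifically making the Markov correspondence precise: one must confirm that admissibility in $Y$ (the entries $a_{i,j}$) matches the geometric nesting of the intervals $C_j$ inside the branch images $\tau_i(C_i)$, treat the two-branch switch regions and the boundary/openness conventions of Proposition \ref{partitionC} with care, and verify that the telescoped limit is genuinely $x_y$ rather than merely some point of $C_{y_1}$.
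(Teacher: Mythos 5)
Your overall architecture is reasonable, and for what it is worth the paper itself offers no proof of Lemma \ref{Markovdynlem1} at all (it is quoted from \cite{KarmadeVries}), so your reconstruction has to stand on its own. The reduction of (i)--(iii) to the single claim $x_y\in C_{y_1}$, the matching of branches to digits via \eqref{ytoemarkov}, the telescoping identification of the intersection point with $x_y$, and the two-coordinate verification of (iv) (including the bookkeeping $n_i(\sigma_Y y)=n_i(y)-1$ in the non-switch case and $n_i(\sigma_Y y)=n_{i+1}(y)-1$ in the switch case) are correct and are the natural route.

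The genuine gap is in the Claim, at exactly the point you flag and then pass over. The assertion that $J_{y_1\cdots y_n}$ is a \emph{closed} subinterval is false: the elements of $\mathcal C$ are not all closed; the openness conventions preceding Proposition \ref{partitionC} make many of them half-open (for instance, an element abutting a switch region $S_l=C_{s_l}$ from the left must exclude its right endpoint, since $S_l$ is closed). Hence the nested intersection $\bigcap_n J_{y_1\cdots y_n}$ may a priori be empty, and what your expansion/telescoping argument actually proves is only that the closures $\overline{J_{y_1\cdots y_n}}$ intersect in the single point $x_y$, i.e. $x_y\in\overline{C_{y_1}}$. For (ii) and (iii) this suffices, because $S_i=C_{s_i}$ is closed; but for (i) --- and hence for the branch identification in (iv), which invokes (i)--(iii) to decide whether $K_\beta$ acts at $(\omega^y,x_y)$ by an equality branch or a switch branch --- the difference between $C_{y_1}$ and its closure is precisely what the endpoint conventions of $\mathcal C$ govern, and it cannot be obtained from the Markov nesting alone. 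The issue is not cosmetic, because $Y$ contains admissible sequences realizing endpoints on the ``wrong'' side: if $p=T_\beta^i1$ is a common endpoint of consecutive elements $C_{j-1},C_j$ (so $p\in C_j$ by convention), let $y$ be the sequence whose $k$th symbol indexes the element of $\mathcal C$ immediately to the left of the $k$th point of the orbit of $p$ under the branches of those left elements; each transition of $y$ is admissible by Proposition \ref{partitionC}(iv), while the same telescoping you use gives $x_y=p\in\overline{C_{j-1}}\setminus C_{j-1}$, even though $y_1=j-1$. So any complete proof must either analyze such boundary codings through the explicit endpoint conventions, or else establish the lemma only off this exceptional set of sequences (which is all that Theorem \ref{measureiso} and the subsequent measure-theoretic applications actually require). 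As written, the crux of parts (i) and (iv) remains open in your argument.
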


This allows us to conclude this section with an isomorphism theorem, the proof of which is a simple adaptation of Theorem 4 in \cite{KarmadeVries}. Recall that $\mathcal B_1\times \B_2$ is the product Borel $\sigma$ algebra on $\Omega\times I_{\beta}$, and let $\F$ be the Borel (which is also the product) $\sigma$ algebra on $Y$.
\begin{thm}\label{measureiso}
If $\mu\in P_\sigma(Y)$ satisfies $\mu(Y')=1$, then the dynamical systems\\ $(\Omega\times I_{\beta},\B_1\times B_2, \mu_*\psi,K_\beta)$ and $(Y,\F, \mu,\sigma)$ are measurably isomorphic.
\end{thm}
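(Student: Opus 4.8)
The goal is to show that $\psi$ furnishes a measurable isomorphism between $(Y,\F,\mu,\sigma)$ and $(\Omega\times I_\beta,\B_1\times\B_2,\psi_*\mu,K_\beta)$ whenever $\mu\in P_\sigma(Y)$ satisfies $\mu(Y')=1$. The plan is to verify the four defining properties of a measurable isomorphism on the full-measure set $Y'$: that $\psi$ is measurable, that it intertwines the two dynamics, that it is injective (mod $\mu$), and that it is essentially surjective with a measurable inverse.

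First I would establish the intertwining relation, which is the only dynamical ingredient and is already in hand: Lemma \ref{Markovdynlem1}(iv) gives $K_\beta\circ\psi=\psi\circ\sigma_Y$ on $Y'$, so $\psi$ is an equivariant map on the full-measure invariant set $Y'$. (Note $Y'$ is $\sigma$-invariant, since shifting a sequence with infinitely many switch-symbols leaves infinitely many.) Next I would check measurability of $\psi$: since $\psi(y)=(\omega^y,x_y)$, it suffices to see that $y\mapsto x_y$ and $y\mapsto\omega^y$ are measurable. Both are defined through the coordinate maps $e_i(y)$ and $\omega_i^y$ in \eqref{ytoemarkov} and \eqref{omegay}, each of which depends on only finitely many coordinates of $y$ (hence is continuous on $Y'$ where $n_i(y)$ is defined), and $x_y$ is a uniformly convergent series \eqref{xy} of such maps; measurability follows.

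The main work, and the step I expect to be the principal obstacle, is injectivity of $\psi$ on a full-measure subset of $Y'$. The map $y\mapsto(\omega^y,x_y)$ can fail to be injective exactly on the countable collection of points whose $K_\beta$-orbit (in the second coordinate) eventually lands on an endpoint in $F$, because there the symbolic coding by \eqref{ytoemarkov} is ambiguous—this is the usual countable ambiguity set for $\beta$-like codings, and it is the reason the theorem is stated only $\mu$-a.e. I would argue that the set of $y\in Y'$ for which $x_y$ has this ambiguity is contained in a countable union of proper subshifts (or in the $\sigma$-orbit of the coordinates whose cylinders map into $F$), hence carries zero measure for any $\sigma$-invariant $\mu$; off this set, Lemma \ref{Markovdynlem1}(i)--(iii) lets one recover $y_1$ from $(\omega^y,x_y)$ by reading off which $C_k$ or $S_i$ contains $x_y$ together with the relevant entry of $\omega^y$, and induction via the intertwining property \eqref{omegay} recovers all subsequent coordinates. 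This recovery procedure simultaneously exhibits a measurable inverse $\psi^{-1}$ on $\psi(Y'')$ for the full-measure set $Y''$ where injectivity holds.

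Finally, essential surjectivity onto a $\psi_*\mu$-full set is automatic once injectivity and measurability of the inverse are settled: $\psi_*\mu$ is by definition supported on $\psi(Y')$, and $\psi_*\mu(\psi(Y''))=\mu(\psi^{-1}\psi(Y''))\ge\mu(Y'')=1$. Collecting these facts, $\psi$ restricted to $Y''$ is an equivariant measurable bijection onto a $\psi_*\mu$-full subset with measurable inverse, which is precisely the assertion that the two systems are measurably isomorphic; the entire argument is, as the authors indicate, a routine adaptation of the coding isomorphism in \cite{KarmadeVries}, the only substantive point being the measure-zero control of the coding-ambiguity set that forces the isomorphism to hold only almost everywhere.
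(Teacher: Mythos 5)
Your overall skeleton (measurability, equivariance via Lemma \ref{Markovdynlem1}(iv), injectivity plus a measurable inverse given by the coding map, essential surjectivity onto the image) is exactly the structure of the argument the paper outsources to Theorem 4 of \cite{KarmadeVries}. The genuine problem is in the step you yourself flag as the principal one. You allow $\psi$ to fail to be injective on an ``ambiguity set'' of points whose orbits hit endpoints, and you dispose of that set by claiming that anything contained in a countable union of proper subshifts ``carries zero measure for any $\sigma$-invariant $\mu$.'' That inference is false: proper subshifts, and even single periodic orbits (which are countable sets), support invariant measures. Since the theorem assumes only $\mu\in P_\sigma(Y)$ with $\mu(Y')=1$ --- no ergodicity and no full support --- you are not entitled to discard any set on such grounds; nothing in your argument rules out an invariant $\mu$ concentrated precisely on the set you wish to ignore. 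The danger is not hypothetical in spirit: by condition (B1) the endpoint orbits are ultimately periodic, so the set you describe would consist of eventually periodic points, which is exactly the kind of set that carries atomic invariant measures.

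The repair is that no ambiguity set needs to be discarded at all: $\psi$ is injective on all of $Y'$. The partition $\mathcal C$ was constructed so that its elements are genuinely disjoint --- the openness conventions on the points $T_\beta^i 1$ and $T_\beta^i(\lbetar/(\beta-1)-1)$ resolve precisely the endpoint ambiguity you are worried about --- and Lemma \ref{Markovdynlem1}(i)--(iii) place $x_y$ in the partition element indexed by the first coordinate of $y$ and determine the first $\omega$-entry unambiguously. Hence $\psi(y)=\psi(y')$ forces the first coordinates of $y$ and $y'$ to agree; applying the conjugacy (iv) together with the $\sigma$-invariance of $Y'$ and inducting recovers every coordinate, so $y=y'$. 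The recovery procedure you describe (reading off which $C_k$ or $S_i$ contains the point, plus the relevant $\omega$-entry) is then the measurable inverse on $\psi(Y')$, and the remainder of your argument --- equivariance, measurability of $\psi$, and $\psi_*\mu(\psi(Y'))\ge\mu(Y')=1$ --- goes through verbatim, with the almost-everywhere character of the isomorphism coming solely from restricting to the full-measure set $Y'$, not from any additional coding ambiguity.
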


\section{$g$-measures for the random $\beta$-transformations}\label{gmeasures}
As a motivation for the choice of potentials that we will be considering in this section, we start with the Dyson model. Consider $X=\{-1,1\}^{\N_0}$ with $\sigma$ the left shift on $X$. This space is of great importance for thermodynamic formalism, as it can be used to model lattices of particles with opposite parity \cite{RuelleBook}, \cite{Hans}. In particular it serves as the underlying structure for Dyson models of Ising spins \cite{IsingBissacot}\index{Dyson Models}. The long-range interaction potential $\varphi_{\alpha}\in C(X)$ for $1<\alpha<2$ and $J\geq 0$ defined by 
\[\varphi_{\alpha,J}(x):=-J\sum_{\substack{(i,j)\in \N_0^2\\ i\neq j}}\frac{x_ix_j}{|i-j|^\alpha}\]
is useful in the study of ferromagnetic lattices. Bissacot et. al. \cite{IsingBissacot} show that under certain conditions there exists so called Gibbs measures for $\varphi_{\alpha,J}$ which are not $g$-measures. The theory of Gibbs measures, even formally defining them, is out of the scope of this paper, and we direct the interested reader to \cite{RuelleBook}.  Clearly $\varphi_{\alpha,J}$ does not have summable variation, so the power of the Ruelle Operator Theorem can not be brought to bear. The investigation into the conditions under which $\varphi_{\alpha,J}$ has a unique $g$-measure is still ongoing, although Pollicott et.a al. have shown that there exists non-unique $g$-measures under certain conditions \cite{PollicottIsing1}. Closely related are the local versions of such potentials: 
\[\varphi_{\alpha,J}^0(x):=-J\sum_{n=1}^\infty\frac{x_0x_n}{n^\alpha}.\]
If $J>0$ we can use the standard integral comparison test from real analysis to calculate, for $1<\alpha<2$:
\[\sum_{n=1}^\infty\var_n(\psi_{\alpha,J}^0)\geq2J\sum_{n=1}\int_n^\infty\frac{1}{x^\alpha}=\frac{2J}{\alpha-1}\sum_{n=1}^\infty\frac{1}{n^{\alpha-1}},\]
which clearly diverges. Thus such $\varphi_{\alpha,J}^0$ do not have summable variation. The existence of unique $g$-measures for $\varphi_{\alpha,J}^0$ is still an open problem. The above calculation does show that if $\alpha>2$ then $\varphi_{\alpha,J}^0$ does have summable variation and so does possess a unique $g$-measure.\\

Defining the potentials $\varphi_{\alpha,J}^0$ as we have was for purely physical reasons, of which have little relevance to our study of random $\beta$-Transformations. We are thus motivated to
define the potential $\varphi\in C(X)$ by
\[\varphi(x)=\sum_{n=0}^\infty\frac{x_n}{2^n}.\]
Clearly $\varphi$ is continuous, because if $d(x,y)<\frac{1}{k}$ then \[|\varphi(x)-\varphi(y)|\leq\sum_{n={k+1}}2^{-k}=\frac{1}{2^k}\sum_{n=0}^\infty2^{-n}=\frac{1}{2^{k-1}}.\]
This argument shows that $\var_n(\varphi)=2^{-n+1}$, and so $\varphi$ has summable variation. In light of Ruelle's Operator Theorem \ref{RuelleOp} thus gives us the existence of some $h\in C(X)$, $\nu\in P(X)$ satisfying the properties of the theorem. For this particular $\varphi$ it is possible to explicitly construct the $h$:

\begin{lem}
	The function $h\in C(X)$ defined by
	\[h(x)=e^{\varphi(x)}\]
	is an eigenfunction of $L_\varphi$ with eigenvalue $e^2+e^{-2}$.
\end{lem}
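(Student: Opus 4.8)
The plan is to compute $L_\varphi h$ directly from the definition \eqref{deftransferop} of the transfer operator and to exploit the self-similar behaviour of $\varphi$ under the shift. Since $(X,\sigma)$ is the full two-shift on the alphabet $\{-1,1\}$, every point $x=(x_0,x_1,\ldots)\in X$ has exactly two preimages under $\sigma$, obtained by prepending a symbol: $y^{+}=(1,x_0,x_1,\ldots)$ and $y^{-}=(-1,x_0,x_1,\ldots)$. Thus the sum defining $L_\varphi h(x)$ has only two terms, one for each choice of prepended sign.

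First I would establish the key recursion for $\varphi$. Splitting off the $n=0$ term of the series defining $\varphi$ and using that $y^{\pm}_{n}=x_{n-1}$ for $n\ge 1$, each remaining term satisfies $y^{\pm}_n/2^n = x_{n-1}/2^{n}=\tfrac12\,(x_{n-1}/2^{\,n-1})$, so after reindexing one obtains
\[
\varphi(y^{\pm}) \;=\; \pm 1 + \tfrac12\,\varphi(x).
\]
This is the only genuine computational content of the statement, and the point to be careful about is the bookkeeping of the factor $\tfrac12$ and the index shift in the geometric series.

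Then I would substitute $h=e^{\varphi}$ into the operator. Because $e^{\varphi(y)}h(y)=e^{\varphi(y)}e^{\varphi(y)}=e^{2\varphi(y)}$, and the recursion above gives $2\varphi(y^{\pm})=\pm 2+\varphi(x)$, we get
\[
L_\varphi h(x) \;=\; e^{2\varphi(y^{+})}+e^{2\varphi(y^{-})} \;=\; e^{2+\varphi(x)}+e^{-2+\varphi(x)} \;=\; \bigl(e^{2}+e^{-2}\bigr)\,e^{\varphi(x)}.
\]
Factoring out $h(x)=e^{\varphi(x)}$ yields $L_\varphi h=(e^{2}+e^{-2})\,h$, which is exactly the claim.

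There is no real analytic obstacle here: membership $h\in C(X)$ is immediate since $\varphi\in C(X)$ (already verified in the text via $\var_n(\varphi)=2^{-n+1}$) and the exponential is continuous, so $L_\varphi h$ is well defined and the verification is purely algebraic. The only place where care is needed is confirming the self-similar identity $\varphi(y^{\pm})=\pm1+\tfrac12\varphi(x)$; everything else follows by substitution.
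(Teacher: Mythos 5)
Your proposal is correct and follows essentially the same route as the paper: a direct evaluation of $L_\varphi h$ over the two preimages $(\pm 1,x_0,x_1,\dots)$, using the self-similar identity $\varphi(\pm 1,x_0,x_1,\dots)=\pm 1+\tfrac12\varphi(x)$ to obtain $e^{\pm 2+\varphi(x)}$ and factor out $h(x)$. The only difference is presentational: you isolate the recursion for $\varphi$ as an explicit intermediate step, whereas the paper substitutes it silently inside the computation.
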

\begin{proof}
	For any $x\in X$ we have
	\begin{align*}
		L_\varphi h(x)& =\sum_{y\in \sigma^{-1}}e^{\varphi(y)}e^{\varphi(y)}\\
		& =e^{\varphi(-1,x_0,x_1,\dots)}e^{\varphi(-1,x_0,x_1,\dots)}+e^{\varphi(1,x_0,x_1,\dots)}e^{\varphi(1,x_0,x_1,\dots)}\\
		& = e^{-2+\sum_{n=0}^\infty \frac{x_n}{2^n}}+e^{2+\sum_{n=0}^\infty \frac{x_n}{2^n}}\\
		& =(e^2+e^{-2})e^{\varphi(x)}\\
		& = (e^2+e^{-2})h(x).
	\end{align*}
\end{proof}
The fact that we are able to so easily construct a potential with summable variation, and construct an eigenfunction, motivates the rest of this section.

Let $\beta\in B$ and let $Y$ be the corresponding subshift of finite type as described in Section \ref{MarkovChain}. However, instead of $Y=\{0,\dots,K\}^\N$ we redefine $Y$ to equal $\{0,\dots,K\}^{\N_0}$ in order to be consistent with the machinery developed in section \ref{gMTO}. There is a natural homeomorphism between $\{0,\dots,K\}^\N$ and $\{0,\dots,K\}^{\N_0}$ which preserves the action of $\sigma$, so this new definition in no way affects $Y$'s role as a Markov partition of $(\{0,1\}^\N\times I_{\beta},K_\beta)$. To each $y\in Y$ we associate an $(e_i(y))\in\{0,\dots, \lfloor\beta\rfloor\}^{\N_0}$ with the appropriate modification of formula \eqref{ytoemarkov} (i.e. just shifting the coordinates by one). Recall that we equip $Y$ with the standard product topology, which is generated by cylinder sets, and is compatible with the metric
\[d(x,y)=\frac{1}{1+\min\{k:y_k\neq x_k\}}.\]
Let us define $\gamma:Y\to \{0,\dots,b_1\}^\N$ by $\gamma(y)=(e_i(y))$. Let us define a potential $\varphi\in C(Y)$ by
\[\varphi(y)=\sum_{n=0}^\infty \frac{e_n(y)}{2^n}.\]
Before discussing $\varphi$ further, we need the following lemma:
\begin{lem}\label{shiftpreimage}
	For any $y=(y_0,y_1\dots)\in Y$: \[\gamma(\sigma^{-1}(y))=\{(i,e_0(y),e_1(y),\dots):i=0,\dots,\lfloor{\beta}\rfloor\}\]
\end{lem}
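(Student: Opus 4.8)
The plan is to make the preimage set explicit, reduce it to a single coordinate, and then realise each admissible digit through the interval dynamics. First I would note that $\sigma(z)=y$ holds exactly when $z=(c,y_0,y_1,\dots)$ with $a_{c,y_0}=1$, so that $\sigma^{-1}(y)=\{(c,y_0,y_1,\dots):a_{c,y_0}=1\}$. For any such $z$, formula \eqref{ytoemarkov} gives $e_n(z)=e_{n-1}(y)$ for every $n\ge1$, since $e_n(z)$ depends only on $z_n=y_{n-1}$ and $z_{n+1}=y_n$; hence $\gamma(z)=(e_0(z),e_0(y),e_1(y),\dots)$, with the common tail $(e_0(y),e_1(y),\dots)$ shared by all preimages. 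This reduces the lemma to the single-coordinate statement $\{e_0(z):z\in\sigma^{-1}(y)\}=\{0,\dots,\lbetar\}$. The inclusion ``$\subseteq$'' is immediate because $e_0(z)$ is by definition always one of the digits $0,\dots,\lbetar$, so only surjectivity requires work.

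To prove surjectivity I would fix $i\in\{0,\dots,\lbetar\}$ and build a preimage on the interval side. From \eqref{xy} one has $x_z=(e_0(z)+x_y)/\beta$, which suggests the candidate point $x^{(i)}:=(x_y+i)/\beta$, namely the $T_i$-preimage of $x_y$. As $x_y\in[0,\lbetar/(\beta-1)]$, I would compute that $x^{(i)}$ ranges over $[i/\beta,\ \lbetar/(\beta(\beta-1))+i/\beta]$, and then check, by comparing with the endpoints defining the $E_k$ and $S_k$, that this closed interval equals $S_i\cup E_i\cup S_{i+1}$ when $1\le i\le\lbetar-1$, equals $E_0\cup S_1$ when $i=0$, and equals $S_{\lbetar}\cup E_{\lbetar}$ when $i=\lbetar$. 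In each case $x^{(i)}$ lands in a region on which the digit $i$ is admissible.

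Next I would pass back to symbols. Let $c$ be the unique index with $x^{(i)}\in C_c$. If $x^{(i)}\in E_i$, then $c\in M_i$ by Proposition \ref{partitionC}(ii); since $T_\beta C_c$ is a union of full partition intervals by Proposition \ref{partitionC}(iv) and contains $x_y\in C_{y_0}$, it contains all of $C_{y_0}$, so $a_{c,y_0}=1$ and $e_0((c,y_0,\dots))=i$. If instead $x^{(i)}\in S_m$ with $c=s_m$ and $m\in\{i,i+1\}$, then Proposition \ref{partitionC}(v) maps the two branches of $C_{s_m}$ onto $C_0$ and $C_K$; matching the arithmetic branch $T_i$ with the digit convention of \eqref{ytoemarkov} forces $x_y\in C_0$ (if $m=i$) or $x_y\in C_K$ (if $m=i+1$), so $y_0\in\{0,K\}$ and $a_{s_m,y_0}=1$, and the same convention yields $e_0((s_m,y_0,\dots))=i$. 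Either way $z=(c,y_0,y_1,\dots)\in Y$ lies in $\sigma^{-1}(y)$ with $e_0(z)=i$, which completes surjectivity.

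I expect the switch-region analysis of the last step to be the main obstacle: one must correctly pair the arithmetic branch $T_i$ with the symbolic branch of $s_m$ (equivalently, with the value $y_{i+1}\in\{0,K\}$) so that the algebraic digit $i$ agrees with the symbolically defined $e_0$. The interval identity of the second step is routine but needs care with the half-open conventions on the $E_k$ and $S_k$ so that the three pieces genuinely cover the closed range of $x^{(i)}$; and the hypothesis $\beta\in B$ (in particular $F\cap S=\emptyset$) is what guarantees that $\mathcal C$ is a genuine Markov partition, making ``the unique $C_c$ containing $x^{(i)}$'' and the set-equalities of Proposition \ref{partitionC} unambiguous.
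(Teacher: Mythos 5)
Your proof is correct, but it runs in the opposite direction from the paper's. The paper fixes $y$ and works purely symbolically, case-splitting on the first symbol $y_0$: if $y_0\in\{0,K\}$, all $\lfloor\beta\rfloor$ switch states are admissible predecessors (yielding the digits $\{1,\dots,\lfloor\beta\rfloor\}$ when $y_0=0$, resp.\ $\{0,\dots,\lfloor\beta\rfloor-1\}$ when $y_0=K$), and the one missing digit is supplied by the self-loops $a_{0,0}=1$, resp.\ $a_{K,K}=1$, coming from $C_0\subset T_\beta C_0$ and $C_K\subset T_\beta C_K$; if $y_0\notin\{0,K\}$, every predecessor is a non-switch state, and the paper argues that $T_\beta^{-1}C_{y_0}$ consists of exactly $\lfloor\beta\rfloor+1$ disjoint intervals, each a partition element $C_i$ with indices in pairwise distinct $M_k$'s, so each digit occurs exactly once. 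You instead fix the digit $i$ and construct a predecessor arithmetically: the point $x^{(i)}=(x_y+i)/\beta$ lands in $S_i\cup E_i\cup S_{i+1}$, and the partition element containing it is the desired symbolic predecessor, with the equality-region and switch-region cases handled via Proposition \ref{partitionC}(ii),(iv) and (v) respectively. Your route buys a genuinely constructive statement --- it exhibits, for each digit, \emph{which} state realises it, and it replaces the paper's injectivity-type counting argument (``no two of the $C_i$ can have their index from the same $M_k$'') with the simple geometric observation that each branch preimage stays in $S_i\cup E_i\cup S_{i+1}$. The price is that you must invoke the compatibility $x_y\in C_{y_0}$ from Lemma \ref{Markovdynlem1}(i)--(iii) to convert membership of $x_y=T_i(x^{(i)})$ in $T_\beta C_c$ (or in $C_0$, $C_K$ in the switch cases) into the adjacency $a_{c,y_0}=1$, whereas the paper's argument never leaves the level of the adjacency matrix and Proposition \ref{partitionC}. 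Both proofs, reassuringly, hinge on the same two structural facts: the refinement structure of $\mathcal C$ and the fact that switch states map precisely onto $C_0$ and $C_K$.
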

\begin{proof}
	$\sigma^{-1}(y)$ consists of all $z\in Y$ of the form $(i,y_0,\dots)$, such that $a_{i,y_0}=1$, where $A$ is the adjacency matrix \eqref{adjacencymatrix}. Now $a_{i,y_0}=1$ if and only if $i\in\bigcup_{k=0}^{\lfloor{\beta}\rfloor}M_k$ and $C_{y_0}\subset T_\beta C_i$ or $i\in \{0,\cdots,K\}\backslash \bigcup_{k=0}^{\lfloor{\beta}\rfloor}M_k$ and $y_0\in \{0,K\}$. If $y_0\in \{0,K\}$ then $i$ can be any value associated to a switch region, so each $i\in \{s_1,\dots,s_{b_1}\}$ defines an element $(i,y_0,\dots)\in\sigma^{-1}y$, so by definition \eqref{ytoemarkov} \[\gamma(\sigma^{-1}(y))\supset \begin{cases}\{1,\dots,\lfloor\beta\rfloor\}&\text{if}~y_0=0\\\{0,\dots,\lfloor\beta\rfloor-1\}&\text{if}~y_0=K.
	\end{cases}.\]
Furthermore we know $a_{0,0}=1$, because $C_0\subset T_\beta C_0$ by Proposition \ref{partitionC} \textit{(i)}. Thus if $y_0=0$ we conclude that $\gamma(\sigma^{-1}(y))=\{0,\dots,\lfloor\beta\rfloor\}$. Similarly $a_{L,L}=1$, because $C_K\subset T_\beta C_K$, from which we conclude that $\gamma(\sigma^{-1}(y))=\{0,\dots,\lfloor\beta\rfloor\}$ if $y_0=K$. If $y_0\notin\{0,K\}$ then $a_{i,y_0}$ can only equal $1$ if $i\in\bigcup_{k=0}^{\lfloor{\beta}\rfloor}M_k$ and $C_{y_0}\subset T_\beta C_i$. As $y_0\neq K$ then $C_{y_0}\neq [1,\lfloor{\beta}\rfloor/(\beta-1)]$, so $T^{-1}_\beta C_{y_0}$ consists of exactly $\lfloor{\beta}\rfloor+1$ disconnected intervals, and from Proposition \ref{partitionC}, because $C_{y_0}\neq C_0,C_K$ we know none of these intervals contain switch regions, so from the same proposition we know that they must be intervals of the form $C_i$ for $i\in\bigcup_{k=0}^{\lfloor{\beta}\rfloor}M_k$, and no two of the $C_i$ can be have their index from the same $M_k$, as they will then be contained in the same equality region, on which $T_\beta$ is injective. Thus from \eqref{ytoemarkov} we can conclude that $\gamma(\sigma^{-1}(y))=\{0,\dots,\lfloor\beta\rfloor\}$.
\end{proof}

With this Lemma we can prove the following:
\begin{prop}
The function $H\in C(Y)$ defined by
$H(y)=e^{\varphi(y)}$ is an eigenfunction of $L_\varphi$, with eigenvalue $\frac{e^{2\lfloor{\beta}\rfloor+2}-1}{e^2-1}$.
\end{prop}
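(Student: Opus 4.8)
The plan is to mimic exactly the computation from the preceding $\{-1,1\}^{\N_0}$ lemma, replacing the binary alphabet by the digit alphabet $\{0,\dots,\lfloor\beta\rfloor\}$ supplied by the random $\beta$-transformation. First I would unwind the definition of the transfer operator: for $y\in Y$,
\[
L_\varphi H(y)=\sum_{z\in\sigma^{-1}y}e^{\varphi(z)}H(z)=\sum_{z\in\sigma^{-1}y}e^{2\varphi(z)},
\]
using $H=e^\varphi$. The crucial point is to understand how $\varphi(z)$ relates to $\varphi(y)$ when $z\in\sigma^{-1}y$. By Lemma~\ref{shiftpreimage}, each preimage $z$ has $\gamma(z)=(i,e_0(y),e_1(y),\dots)$ for a unique $i\in\{0,\dots,\lfloor\beta\rfloor\}$, so that $e_0(z)=i$ and $e_{n}(z)=e_{n-1}(y)$ for $n\ge 1$. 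Therefore
\[
\varphi(z)=\sum_{n=0}^\infty\frac{e_n(z)}{2^n}=i+\sum_{n=1}^\infty\frac{e_{n-1}(y)}{2^n}=i+\tfrac12\varphi(y),
\]
which is the identity that makes the whole computation collapse.

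Substituting this into the sum gives
\[
L_\varphi H(y)=\sum_{i=0}^{\lfloor\beta\rfloor}e^{2i+\varphi(y)}
=e^{\varphi(y)}\sum_{i=0}^{\lfloor\beta\rfloor}e^{2i}
=H(y)\sum_{i=0}^{\lfloor\beta\rfloor}(e^2)^i.
\]
The remaining step is to evaluate the finite geometric series $\sum_{i=0}^{\lfloor\beta\rfloor}(e^2)^i=\frac{(e^2)^{\lfloor\beta\rfloor+1}-1}{e^2-1}=\frac{e^{2\lfloor\beta\rfloor+2}-1}{e^2-1}$, which is precisely the claimed eigenvalue. This identifies $H$ as an eigenfunction with that eigenvalue, and since $H=e^\varphi>0$ it is a strictly positive continuous eigenfunction, consistent with Ruelle's Operator Theorem~\ref{RuelleOp}.

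The one genuine subtlety — and the step I expect to require care rather than the trivial algebra — is the justification that the sum over $\sigma^{-1}y$ is exactly the sum over $i\in\{0,\dots,\lfloor\beta\rfloor\}$ with no multiplicities and no missing digits. This is exactly what Lemma~\ref{shiftpreimage} provides: although a point $y$ may have a variable number of symbolic preimages in $Y$ (depending on how many switch-region states $s_j$ map into $y_0$), the map $z\mapsto e_0(z)$ is a bijection from $\sigma^{-1}y$ onto the full digit set $\{0,\dots,\lfloor\beta\rfloor\}$, so each digit $i$ is hit exactly once. Hence I would invoke Lemma~\ref{shiftpreimage} verbatim to reindex the sum by $i$, and the rest is the bookkeeping of the geometric series. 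I would also note in passing that continuity of $H$ follows from the continuity of $\varphi$ already established, so $H\in C(Y)$ is legitimate input to $L_\varphi$.
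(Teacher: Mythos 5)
Your proposal is correct and follows essentially the same route as the paper's own proof: unwind $L_\varphi H(y)=\sum_{z\in\sigma^{-1}y}e^{2\varphi(z)}$, use Lemma~\ref{shiftpreimage} to reindex the preimage sum over the digits $i\in\{0,\dots,\lfloor\beta\rfloor\}$, exploit the relation $\varphi(z)=i+\tfrac12\varphi(y)$, and sum the geometric series. If anything, you are slightly more careful than the paper in making explicit that the reindexing requires $z\mapsto e_0(z)$ to hit each digit exactly once (no multiplicities), a point the paper's proof uses implicitly via the same lemma.
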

\begin{proof}
For any $y\in Y$:
\begin{align*}
L_\varphi H(y)&= \sum_{z\in\sigma^{-1}(y)}e^{2\varphi(z)}\\
& = \sum_{i=0}^{\lfloor{\beta}\rfloor}e^{2\varphi(i,y_0,\dots)},
\end{align*}
where the last equality follows from lemma \ref{shiftpreimage}.
Thus
\begin{align*}
L_\varphi H(y)
& = \sum_{i=0}^{\lfloor{\beta}\rfloor}e^{2i+2\sum_{j=1}^\infty\frac{e_{j-1}(y)}{2^j}}\\
& =\sum_{i=0}^{\lfloor{\beta}\rfloor}e^{2i+\sum_{j=0}^\infty\frac{e_{j}(y)}{2^{j}}}\\
& =e^{\varphi(y)}\sum_{i=0}^{\lfloor{\beta}\rfloor}e^{2i}\\
& = \frac{e^{2\lfloor{\beta}\rfloor+2}-1}{e^2-1}H(y)
\end{align*}
\end{proof}
The ease of the above calculations suggests an approach of generating uncountably infinitely many unique equilibrium states for $\sigma$ on $Y$. Fix an arbitrary function $\theta:\{0,\dots,\lfloor{\beta}\rfloor\}\to \R$. We claim there exists a potential $\varphi_\theta\in C(Y)$ of summable variation and a corresponding eigenfunction of $L_{\varphi_{\theta}}$, say $H\in C(Y)$, corresponding to the eigenvalue $\sum_{i=0}^{\lfloor{\beta}\rfloor}e^{\theta(i)}$.
\begin{prop}\label{infiniteeigenvalues}
Define $\varphi_{\theta}, H\in C(Y)$ by
\[\varphi_\theta(y)=\sum_{j=0}^\infty\frac{\theta(e_j(y))}{2^{j}},\quad H(y)=e^{\varphi_\theta(y)}.\]
$H$ is an eigenvector of $L_{\varphi_\theta}$ corresponding to eigenvalue $\sum_{i=0}^{\lfloor{\beta}\rfloor}e^{2\theta(i)}$
\end{prop}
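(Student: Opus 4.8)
The plan is to imitate the proof of the preceding proposition, carrying the general weight $\theta$ through in place of the identity. Since $H=e^{\varphi_\theta}$, the definition \eqref{deftransferop} of the transfer operator gives immediately
\[
L_{\varphi_\theta}H(y)=\sum_{z\in\sigma^{-1}(y)}e^{\varphi_\theta(z)}H(z)=\sum_{z\in\sigma^{-1}(y)}e^{2\varphi_\theta(z)},
\]
so the whole computation reduces to understanding the values of $\varphi_\theta$ on the fibre $\sigma^{-1}(y)$.

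The second step is a telescoping computation exploiting the self-similar form of $\varphi_\theta$. Any $z\in\sigma^{-1}(y)$ has the form $z=(z_0,y_0,y_1,\dots)$, so that its digit sequence satisfies $e_j(z)=e_{j-1}(y)$ for every $j\ge 1$, while $e_0(z)$ is determined by the leading coordinate. Substituting into the definition of $\varphi_\theta$ and re-indexing the tail yields
\[
\varphi_\theta(z)=\theta(e_0(z))+\tfrac12\varphi_\theta(y),
\]
whence $e^{2\varphi_\theta(z)}=e^{2\theta(e_0(z))}H(y)$. Factoring $H(y)$ out of the sum, I am left to show $\sum_{z\in\sigma^{-1}(y)}e^{2\theta(e_0(z))}=\sum_{i=0}^{\lfloor \beta \rfloor}e^{2\theta(i)}$.

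This final identity is exactly where Lemma \ref{shiftpreimage} is brought to bear: it identifies $\gamma(\sigma^{-1}(y))$ with the $\lfloor \beta \rfloor+1$ sequences obtained by prepending each digit $i\in\{0,\dots,\lfloor \beta \rfloor\}$ to $\gamma(y)$. Reading the lemma as the assertion that the leading digit $e_0$ sets up a bijection between the fibre $\sigma^{-1}(y)$ and $\{0,\dots,\lfloor \beta \rfloor\}$, the sum over preimages collapses to a sum over $i$ in which each value of $e_0$ occurs exactly once, delivering the claimed eigenvalue $\sum_{i=0}^{\lfloor \beta \rfloor}e^{2\theta(i)}$.

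I expect the only genuine subtlety to lie in this bijectivity rather than mere surjectivity onto the digit alphabet: one must rule out two distinct preimages sharing the same value of $e_0$, which could a priori occur at the boundary columns $y_0\in\{0,K\}$, where the switch states and the fixed states $0,K$ both contribute preimages. This is, however, precisely what the case analysis in the proof of Lemma \ref{shiftpreimage} guarantees, so no new argument is needed. I would close by noting that $\varphi_\theta\in C(Y)$: since $\theta$ is bounded on the finite set $\{0,\dots,\lfloor \beta \rfloor\}$, the defining series converges uniformly with $\var_n(\varphi_\theta)=O(2^{-n})$, so $\varphi_\theta$ has summable variation and the hypotheses of the later application of Ruelle's theorem are met.
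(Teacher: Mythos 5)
Your proof is correct and follows essentially the same route as the paper's: reduce to $\sum_{z\in\sigma^{-1}(y)}e^{2\varphi_\theta(z)}$, exploit the self-similarity $\varphi_\theta(z)=\theta(e_0(z))+\tfrac12\varphi_\theta(y)$, and invoke Lemma \ref{shiftpreimage} to convert the sum over the fibre into a sum over the digits $i=0,\dots,\lfloor\beta\rfloor$. Your explicit flagging that one needs the map $z\mapsto e_0(z)$ to be a \emph{bijection} from $\sigma^{-1}(y)$ onto $\{0,\dots,\lfloor\beta\rfloor\}$ (not merely surjective) is a point the paper uses implicitly in the same way, so the two arguments coincide.
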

\begin{proof}
For convenience we set $\varphi:=\varphi_{\theta}$. Due to Lemma \ref{shiftpreimage}, for any $y\in Y$,
\begin{align*}
L_\varphi H(y)&=\sum_{x\in \sigma^{-1}y}e^{2\sum_{j=0}^\infty \frac{\theta(e_j(x))}{2^j}}\\
& =\sum_{i=0}^{\lfloor{\beta}\rfloor}e^{2\theta(i)+2\sum_{j=0}^\infty \frac{\theta(e_j(y))}{2^{j+1}}}\\
& =H(y)\sum_{i=0}^{\lfloor{\beta}\rfloor}e^{2\theta(i)}.	
\end{align*}
\end{proof}
Although motivated by the Dyson model for the long range interaction of dipoles, the potentials $\varphi_{\theta}$ have a glaring difference in that their dependence of the tail decreases exponentially, instead of at a polynomial rate. This is necessary to ensure that the potentials have summable variation, allowing us to bring Ruelle's Operator Theorem to bear. Although we can not define a potential with strict polynomial decay on the terms, the following proposition shows we can get arbitrarily close in some sense.

\begin{prop}\label{infiniteeigenvaluesalpha}
Fix some $\alpha>1$ and $\theta:\{0,\dots,\lfloor{\beta}\rfloor\}\to\R$. Define $\varphi_{\theta,\alpha},H_{\theta,\alpha}\in C(Y)$ by 
\[\varphi_{\theta,\alpha}(y)=\sum_{j=0}^\infty\frac{\theta(e_j(y))(\alpha-1)^{j}}{\alpha^{j+1}},\quad H_{\theta,\alpha}(y)=e^{(\alpha-1)\varphi_{\theta,\alpha}(y)}.\]
Then $\sum_{i=0}^{\lfloor{\beta}\rfloor}e^{\theta(i)}$ is an eigenvalue of $L_{\varphi_{\theta,\alpha}}$ with eigenvector $H_{\theta,\alpha}$.
\end{prop}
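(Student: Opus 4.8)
The plan is to imitate the direct computation used in the proof of Proposition \ref{infiniteeigenvalues}, the only genuinely new ingredient being to keep track of how the geometric weights $(\alpha-1)^j/\alpha^{j+1}$ transform under the index shift induced by passing to preimages under $\sigma$. First I would unwind the definition of the transfer operator: since $H_{\theta,\alpha} = e^{(\alpha-1)\varphi_{\theta,\alpha}}$, for every $y \in Y$ we have
\[
L_{\varphi_{\theta,\alpha}} H_{\theta,\alpha}(y) = \sum_{x\in\sigma^{-1}y} e^{\varphi_{\theta,\alpha}(x)} H_{\theta,\alpha}(x) = \sum_{x\in\sigma^{-1}y} e^{\alpha\,\varphi_{\theta,\alpha}(x)}.
\]
The choice of the exponent $\alpha - 1$ in the definition of $H_{\theta,\alpha}$ is exactly what collapses the product $e^{\varphi_{\theta,\alpha}}\cdot H_{\theta,\alpha}$ into the single exponential $e^{\alpha\varphi_{\theta,\alpha}}$, and this is the structural reason the argument will go through.

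Next I would invoke Lemma \ref{shiftpreimage}, which says that the $\gamma$-images of the preimages $x\in\sigma^{-1}y$ are exactly the sequences $(i, e_0(y), e_1(y), \dots)$ for $i = 0,\dots,\lfloor\beta\rfloor$; equivalently each preimage is labelled by a digit $i$ with $e_0(x) = i$ and $e_j(x) = e_{j-1}(y)$ for $j \ge 1$. Substituting this into $\varphi_{\theta,\alpha}(x)$ and reindexing the tail, I expect to obtain the self-similar identity
\[
\alpha\,\varphi_{\theta,\alpha}(x) = \theta(i) + (\alpha-1)\,\varphi_{\theta,\alpha}(y).
\]
Here the $j=0$ term peels off to give $\theta(i)$, while the shifted tail $\sum_{k\ge 0}\theta(e_k(y))(\alpha-1)^{k+1}/\alpha^{k+2}$ equals $\frac{\alpha-1}{\alpha}\,\varphi_{\theta,\alpha}(y)$; multiplying through by $\alpha$ then turns the factor $\frac{\alpha-1}{\alpha}$ into precisely $\alpha-1$, which is exactly the exponent appearing in $H_{\theta,\alpha}$. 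Substituting back gives $e^{\alpha\varphi_{\theta,\alpha}(x)} = e^{\theta(i)} H_{\theta,\alpha}(y)$, and summing over the digits $i$ yields
\[
L_{\varphi_{\theta,\alpha}} H_{\theta,\alpha}(y) = H_{\theta,\alpha}(y) \sum_{i=0}^{\lfloor\beta\rfloor} e^{\theta(i)},
\]
which is the asserted eigenvalue relation.

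Before carrying out the eigenvalue computation I would also check that $\varphi_{\theta,\alpha}$ is genuinely continuous, so that the statement $\varphi_{\theta,\alpha}\in C(Y)$ is justified and $L_{\varphi_{\theta,\alpha}}$ is well defined on $C(Y)$. As $\theta$ is bounded on the finite set $\{0,\dots,\lfloor\beta\rfloor\}$, and since agreement of two points on their first $n$ coordinates forces their $\gamma$-images to agree at least up to index $n-2$ (the digit $e_{n-1}$ may still depend on the $n$-th coordinate via the switch-region rule), I expect a bound of the shape $\var_n(\varphi_{\theta,\alpha}) \le 2\|\theta\|_\infty(\frac{\alpha-1}{\alpha})^{\,n-1}$, whose geometric tail is summable precisely because $0<\frac{\alpha-1}{\alpha}<1$ for all $\alpha>1$. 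This both establishes continuity and records the summable variation needed for the subsequent application of Ruelle's Operator Theorem \ref{RuelleOp}.

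I do not foresee a deep obstacle: the proposition is a parametrized variant of an explicit eigenfunction calculation already performed for $\alpha=2$. The single point demanding care is the reindexing of the weighted tail after the preimage substitution — one must verify that the emergent multiplicative factor is exactly $\alpha-1$, since it is this precise matching with the exponent of $H_{\theta,\alpha}$ that forces the eigenvalue to be the digit sum $\sum_i e^{\theta(i)}$ and nothing else. A secondary subtlety worth stating explicitly is that Lemma \ref{shiftpreimage} supplies a full set of $\lfloor\beta\rfloor+1$ preimage digits for every $y$, including the boundary cases $y_0\in\{0,K\}$, which is what makes the index $i$ range over the complete alphabet $\{0,\dots,\lfloor\beta\rfloor\}$ independently of $y$ and hence the eigenvalue independent of $y$.
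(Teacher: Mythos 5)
Your proof is correct and follows essentially the same route as the paper's: unwind $e^{\varphi_{\theta,\alpha}}H_{\theta,\alpha}=e^{\alpha\varphi_{\theta,\alpha}}$, apply Lemma \ref{shiftpreimage} to parametrize $\sigma^{-1}y$ by the digits $i\in\{0,\dots,\lfloor\beta\rfloor\}$, and reindex the weighted tail to obtain $\alpha\varphi_{\theta,\alpha}(x)=\theta(i)+(\alpha-1)\varphi_{\theta,\alpha}(y)$, which yields the eigenvalue $\sum_{i=0}^{\lfloor\beta\rfloor}e^{\theta(i)}$. Your added continuity check via the geometric bound $\var_n(\varphi_{\theta,\alpha})\le 2\|\theta\|_\infty\bigl(\tfrac{\alpha-1}{\alpha}\bigr)^{n-1}$ is a harmless (indeed cleaner) anticipation of the paper's separate summable-variation proposition, not a departure from the argument.
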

\begin{proof}
For any $y\in Y$ we have, by Lemma \ref{shiftpreimage}
\begin{align*}
L_{\varphi_{\theta,\alpha}} H_{\theta,\alpha}(y)&=\sum_{x\in \sigma^{-1}y}e^{(\alpha-1)\varphi_{\theta,\alpha}(x)}e^{\varphi_{\theta,\alpha}(x)}\\
& =\sum_{i=0}^{\lfloor{\beta}\rfloor}e^{\alpha\varphi_{\theta,\alpha}(i,y_0,\dots)}\\
&=\sum_{i=0}^{\lfloor{\beta}\rfloor}e^{\alpha\Big(\sum_{j=1}^\infty\frac{\theta(e_{j-1}(y))(\alpha-1)^{j}}{\alpha^{j+1}} +\frac{\theta(i)}{\alpha}\Big)}\\
& = e^{\sum_{j=0}^\infty\frac{\theta(e_{j}(y))(\alpha-1)^{j+1}}{\alpha^{j+1}}}\sum_{i=0}^{\lfloor{\beta}\rfloor}e^{\theta(i)}\\
& =e^{(\alpha-1)\varphi_{\theta,\alpha}(y)}\sum_{i=0}^{\lfloor{\beta}\rfloor}e^{\theta(i)}\\
& =\sum_{i=0}^{\lfloor{\beta}\rfloor}e^{\theta(i)} H_{\theta,\alpha}(y).
\end{align*}
\end{proof}
To apply Ruelle's Operator Theorem we first need to check whether $\varphi_{\theta,\alpha}$ has summable variation:
\begin{prop}
$\varphi_{\theta,\alpha}$ possesses summable variation for any $\alpha>1$ and $\theta:\{0,\dots,\lfloor{\beta}\rfloor\}\to\R$.
\end{prop}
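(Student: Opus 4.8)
The plan is to bound $\var_n(\varphi_{\theta,\alpha})$ directly from the definition and then sum the resulting geometric series. First I would record the only structural fact that is needed: by the (shifted) formula \eqref{ytoemarkov}, the value $e_j(y)$ is determined by the pair of coordinates $(y_j,y_{j+1})$ — it reads off $y_j$ alone unless $y_j$ is a switch state $s_k$, in which case it additionally consults $y_{j+1}\in\{0,K\}$. Consequently, if $x,y\in Y$ agree in their first $n$ coordinates, i.e. $x_i=y_i$ for $0\le i\le n-1$, then $e_j(x)=e_j(y)$ for every $j$ with $j+1\le n-1$, that is, for all $j\le n-2$.

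Next, writing $\|\theta\|_\infty:=\max_{0\le i\le\lbetar}|\theta(i)|<\infty$ (a maximum over a finite set), the matching of the low-index terms collapses the difference of potentials to its tail:
\[
|\varphi_{\theta,\alpha}(x)-\varphi_{\theta,\alpha}(y)|
=\Big|\sum_{j\ge n-1}\frac{\big(\theta(e_j(x))-\theta(e_j(y))\big)(\alpha-1)^{j}}{\alpha^{j+1}}\Big|
\le \frac{2\|\theta\|_\infty}{\alpha}\sum_{j\ge n-1}\Big(\frac{\alpha-1}{\alpha}\Big)^{j}.
\]
Since $\alpha>1$ the ratio $r:=(\alpha-1)/\alpha$ lies in $(0,1)$, so the geometric tail sums to $r^{n-1}/(1-r)=\alpha\, r^{n-1}$ (using $1-r=1/\alpha$), giving
\[
\var_n(\varphi_{\theta,\alpha})\le 2\|\theta\|_\infty\Big(\frac{\alpha-1}{\alpha}\Big)^{n-1}.
\]

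Finally I would sum over $n$: $\sum_{n\ge 1}\var_n(\varphi_{\theta,\alpha})\le 2\|\theta\|_\infty\sum_{n\ge1}r^{n-1}=2\|\theta\|_\infty\,\alpha<\infty$, which is exactly summable variation.

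The computation contains no real obstacle; the one point demanding care is the index bookkeeping in the first step. Because $e_j$ depends on $y_{j+1}$ as well as $y_j$, fixing the first $n$ coordinates only forces agreement of the terms up to $j=n-2$, so the tail must be taken to start at $j=n-1$ rather than $j=n$. Getting this off-by-one right is what keeps the bound honest; everything else is the convergence of a single geometric series, which is precisely where the hypothesis $\alpha>1$ (equivalently $r<1$) is used.
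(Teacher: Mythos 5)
Your proof is correct and takes essentially the same approach as the paper: bound $\var_n(\varphi_{\theta,\alpha})$ by $2\|\theta\|_\infty$ times a geometric tail and then sum the resulting series over $n$. The only differences are cosmetic ones in your favor: you sum the geometric tail exactly, whereas the paper estimates it via the integral comparison test, and your index bookkeeping (starting the tail at $j=n-1$ because $e_j$ depends on the pair $(y_j,y_{j+1})$) is more careful than the paper's, which starts the tail at $j=n$ --- an off-by-one that is harmless for the conclusion but which your argument handles honestly.
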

\begin{proof}
By definition we have for every $n\in \N$,
\[\var_n(\varphi_{\theta,\alpha})\leq2\|\theta\|_\infty\sum_{j=n}^\infty\frac{(\alpha-1)^j}{\alpha^{j+1}}\leq2\|\theta\|_\infty \sum_{j=n}^\infty\frac{(\alpha-1)^j}{\alpha^{j}}\]
As $(\alpha-1)^j/\alpha^j$ is strictly decreasing in $j$, we can apply the integral test to bound, for any $n\in \N$
\begin{align*}
\sum_{j=n}^\infty\frac{(\alpha-1)^j}{\alpha^{j}}&\leq \frac{(\alpha-1)^n}{\alpha^{n}}+\int_n^\infty\frac{(\alpha-1)^x}{\alpha^{x}}dx\\
& =\frac{(\alpha-1)^n}{\alpha^{n}}+\Big(\ln(\frac{\alpha}{\alpha-1})\Big)^{-1}e^{-n\ln(\frac{\alpha}{\alpha-1})}
\end{align*}
Thus
\begin{align*}
\sum_{n=1}^\infty\var_n(\varphi_{\theta,\alpha})&\leq \sum_{n=1}^\infty\frac{(\alpha-1)^n}{\alpha^{n}}+\Big(\ln(\frac{\alpha}{\alpha-1})\Big)^{-1}e^{-n\ln(\frac{\alpha}{\alpha-1})}\\
& <\infty.
\end{align*}
\end{proof}

The natural question, and indeed the desired conclusion, from finding uncountably many potentials with different eigenvalues for the corresponding transfer operator, is whether we can find uncountably many associated unique $g$-measures. To that end, we would like to be able to bring to bear the machinery developed in Section \ref{gMTO}. 
\begin{thm}\label{eigenequality}
Fix a $\varphi_{\theta,\alpha}\in C(Y)$ as in Proposition \ref{infiniteeigenvaluesalpha}. The eigenvalue in that proposition is the same eigenvalue guaranteed by Ruelle's Operator Theorem \ref{RuelleOp} and the eigenvector $H_{\theta,\alpha}$ is, up to a normalising constant, the same as the eigenvector $h_{\theta,\alpha}$ guaranteed by Ruelle's Operator Theorem.
\end{thm}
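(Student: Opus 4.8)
The plan is to exploit the uniform-convergence statement in Ruelle's Operator Theorem \ref{RuelleOp}, applied to the explicit eigenfunction itself. Write $\varphi := \varphi_{\theta,\alpha}$, $H := H_{\theta,\alpha}$, and $\rho := \sum_{i=0}^{\lfloor{\beta}\rfloor}e^{\theta(i)}$, so that Proposition \ref{infiniteeigenvaluesalpha} gives $L_\varphi H = \rho H$. Since $\varphi$ has summable variation, Theorem \ref{RuelleOp} furnishes a $\lambda>0$, a strictly positive $h\in C(Y)$, and $\nu\in P(Y)$ with $\nu(h)=1$, such that $L_\varphi h = \lambda h$ and $\|L_\varphi^n f/\lambda^n - \nu(f)h\|_\infty \to 0$ for every $f\in C(Y)$. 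The whole theorem will follow by taking $f = H$ in this last convergence.

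First I would record two elementary positivity facts. The function $H = e^{(\alpha-1)\varphi}$ is continuous and strictly positive, hence bounded away from $0$ and $\infty$ on the compact space $Y$; consequently $\nu(H)>0$, because $H$ is a strictly positive continuous function and $\nu$ is a probability measure. Next, iterating the eigenvalue relation gives $L_\varphi^n H = \rho^n H$ for every $n$, so the convergence statement with $f=H$ reads
\[\left(\frac{\rho}{\lambda}\right)^n H \;\longrightarrow\; \nu(H)\,h \qquad \text{uniformly on } Y.\]

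Now I would rule out $\rho \neq \lambda$ by a pointwise size argument. Fix any $y\in Y$. If $\rho>\lambda$ then $(\rho/\lambda)^n\to\infty$, and since $H(y)>0$ the left-hand side diverges, contradicting convergence to the finite value $\nu(H)h(y)$. If $\rho<\lambda$ then $(\rho/\lambda)^n\to 0$, forcing the limit $\nu(H)h(y)=0$; but $\nu(H)>0$ and $h(y)>0$, again a contradiction. Hence $\rho=\lambda$, which is the claimed equality of eigenvalues. With $\rho=\lambda$ the left-hand side of the displayed limit is constantly equal to $H$, so the limit yields $H = \nu(H)\,h$ identically on $Y$. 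Since $\nu(H)$ is a fixed positive constant, this is exactly the assertion that $H_{\theta,\alpha}$ coincides with $h_{\theta,\alpha}$ up to a normalising constant.

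The argument is essentially free of obstacles once one thinks to feed $H$ into the convergence in Ruelle's theorem; the only points needing care are the strict positivity of $H$ (immediate from its exponential form) and the strict positivity of $\nu(H)$ (immediate since $\nu$ is a probability measure and $H$ is continuous and positive on a compact space). The same reasoning is the standard route to the uniqueness, up to a scalar, of the leading positive eigenfunction of a Ruelle transfer operator, so no genuinely new difficulty is expected here.
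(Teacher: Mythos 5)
Your proposal is correct and follows essentially the same route as the paper's own proof: feed $H_{\theta,\alpha}$ into the uniform-convergence statement of Ruelle's Operator Theorem, use $L_{\varphi}^n H = \rho^n H$ to see that $(\rho/\lambda)^n H$ converges uniformly to $\nu(H)h$, and conclude $\rho=\lambda$ and $H=\nu(H)h$ from strict positivity. The only difference is that you spell out the positivity facts and the two cases $\rho>\lambda$, $\rho<\lambda$ that the paper compresses into ``this is only possible if,'' which is a fair elaboration rather than a new argument.
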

\begin{proof}
Let us denote the eigenvalue corresponding to $H_{\theta,\alpha}$ by $\lambda_{H_{\theta,\alpha}}$, and the one coming from Ruelle's Operator Theorem by $\lambda$. Let $\nu_{\theta,\alpha}$ be the probability measure coming from Ruelle's Operator Theorem.  As $H_{\theta,\alpha}\in C(Y)$, Ruelle's Operator Theorem gives that $\|L_{\varphi_{\theta,\alpha}}^n H_{\theta,\alpha}/\lambda^n-\nu_{\theta,\alpha}(H_{\theta,\alpha})h_{\theta,\alpha}\|_\infty\to 0$, meaning that $(\lambda_{H_{\theta,\alpha}}/\lambda)^nH_{\theta,\alpha}$ converges uniformly to $\nu_{\theta,\alpha}(H_{\theta,\alpha})h_{\theta,\alpha}$. As $\nu_{\theta,\alpha}(H_{\theta,\alpha})h_{\theta,\alpha}>0$, this is only possible if $\lambda_{H_{\theta,\alpha}}=\lambda$. This immediately implies that $h_{\theta,\alpha}=H_{\theta,\alpha}/\nu_{\theta,\alpha}(H_{\theta,\alpha})$.
\end{proof}
\begin{rem}\label{remuniquenesseigen}
The reader will note that nowhere in the above proof did we use any intrinsic property of $\varphi_{\theta,\alpha}$, or of $H_{\theta,\alpha}$ or $\lambda_{H_{\theta,\alpha}}$. Thus the above theorem is valid for any potential $\phi$ with summable variation, and any eigenvalue and normalized (with respect to $\nu$) eigenvector one can find for the associated transfer operator $L_\phi$.
\end{rem}

For any $\varphi_{\theta,\alpha}$ Corollary \ref{RuelleCor} guarantees the existence of a unique equilibrium state $\mu_{\theta,\alpha}$, which is the $g$ measure corresponding to the $g$-function
\[g_{\theta,\alpha}:=\frac{e^{\varphi_{\theta,\alpha}}h_{\theta,\alpha}}{\lambda_{H_{\theta,\alpha}} (h_{\theta,\alpha}\circ\sigma)}.\]
Theorem \ref{eigenequality} immediately gives the equality
\begin{equation}\label{eqgalphatheta}
g_{\theta,\alpha}:=\frac{e^{\varphi_{\theta,\alpha}}H_{\theta,\alpha}}{\lambda_{H_{\theta,\alpha}} (H_{\theta,\alpha}\circ\sigma)}.\
\end{equation}
This allows us to prove the following uniqueness criterion.
\begin{thm}\label{uniquepot}
Two potentials $\varphi_{\theta,\alpha},\varphi_{\theta',\alpha'}\in C(Y)$ have the same unique equilibrium state if and only if $\theta=\theta'$.
\end{thm}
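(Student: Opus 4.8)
The plan is to reduce the entire biconditional to an explicit computation of the $g$-function $g_{\theta,\alpha}$ appearing in \eqref{eqgalphatheta}, and then to feed that computation into the uniqueness results of Section~\ref{gMTO}. The crucial point I would establish first is that $g_{\theta,\alpha}$ does not depend on $\alpha$ at all: it is a function of $\theta$ and of the single coordinate $e_0(y)$ only. Once this formula is in hand, the statement follows from two facts already available, namely that $\mu_{\theta,\alpha}$ is the \emph{unique} $g$-measure of $g_{\theta,\alpha}$ (Corollary~\ref{RuelleCor}) and that two continuous $g$-functions sharing a $g$-measure must coincide (Lemma~\ref{gmeassupport}(ii)).

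First I would carry out the key computation. Writing $H_{\theta,\alpha}=e^{(\alpha-1)\varphi_{\theta,\alpha}}$ and substituting into \eqref{eqgalphatheta} gives
\[g_{\theta,\alpha}(y)=\frac{1}{\lambda_{H_{\theta,\alpha}}}\,e^{\alpha\varphi_{\theta,\alpha}(y)-(\alpha-1)\varphi_{\theta,\alpha}(\sigma y)},\]
so it suffices to evaluate the exponent. Using $e_j(\sigma y)=e_{j+1}(y)$ and splitting the $j=0$ term off the series defining $\alpha\varphi_{\theta,\alpha}(y)$, I would telescope:
\[\alpha\varphi_{\theta,\alpha}(y)=\theta(e_0(y))+\sum_{j\ge 1}\frac{\theta(e_j(y))(\alpha-1)^j}{\alpha^{j}},\qquad (\alpha-1)\varphi_{\theta,\alpha}(\sigma y)=\sum_{j\ge 1}\frac{\theta(e_j(y))(\alpha-1)^j}{\alpha^{j}},\]
so the two tails cancel and the exponent collapses to $\theta(e_0(y))$. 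Recalling from Proposition~\ref{infiniteeigenvaluesalpha} that $\lambda_{H_{\theta,\alpha}}=\sum_{i=0}^{\lbetar}e^{\theta(i)}$, this yields the clean formula
\[g_{\theta,\alpha}(y)=\frac{e^{\theta(e_0(y))}}{\sum_{i=0}^{\lbetar}e^{\theta(i)}},\]
which is manifestly independent of $\alpha$ (and indeed one checks it lies in $\G_c$, since by Lemma~\ref{shiftpreimage} the $\lbetar+1$ preimages of $y$ realize each digit $i$ as $e_0$ exactly once, making the values sum to $1$).

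With this formula the two directions are short. For the \emph{if} direction, if $\theta=\theta'$ then $g_{\theta,\alpha}=g_{\theta',\alpha'}$ identically, the values of $\alpha,\alpha'$ being irrelevant; since by Corollary~\ref{RuelleCor} each of $\mu_{\theta,\alpha}$ and $\mu_{\theta',\alpha'}$ is the unique $g$-measure of this common $g$-function, they coincide. For the \emph{only if} direction, if the two potentials share an equilibrium state $\mu$, then $\mu$ is simultaneously a $g$-measure for $g_{\theta,\alpha}$ and $g_{\theta',\alpha'}$, both of which lie in $\G_c$ and are strictly positive; Lemma~\ref{gmeassupport}(ii) then forces $g_{\theta,\alpha}=g_{\theta',\alpha'}$ as functions on $Y$. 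Evaluating this equality along points $y$ realizing each digit $d\in\{0,\dots,\lbetar\}$ as $e_0(y)$ — possible because every $M_d$ is nonempty and $(Y,\sigma)$ is topologically mixing — gives $e^{\theta(d)}/\sum_i e^{\theta(i)}=e^{\theta'(d)}/\sum_i e^{\theta'(i)}$ for every $d$.

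The last step is where the genuine obstacle lies, and I would isolate it carefully. The displayed relation says precisely that $\theta$ and $\theta'$ induce the same probability vector $\big(e^{\theta(d)}/\sum_i e^{\theta(i)}\big)_d$, which is equivalent to $\theta-\theta'$ being a \emph{constant} rather than identically zero; indeed, adding a constant $c$ to $\theta$ changes $\varphi_{\theta,\alpha}$ by $c\sum_{j\ge 0}(\alpha-1)^j/\alpha^{j+1}=c$, hence leaves the equilibrium state unchanged. Consequently the bare pointwise identity does not by itself yield $\theta=\theta'$, and to reach literal equality one must normalize the family — the natural convention being $\sum_{i=0}^{\lbetar}e^{\theta(i)}=1$, i.e. taking each $\theta$ to be a genuine log-probability vector. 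Under this normalization the partition functions satisfy $Z=Z'=1$, the additive constant vanishes, and $\theta=\theta'$ follows. I would therefore state and prove the theorem with this normalization in force, noting explicitly that without it the sharp conclusion is equality of $\theta$ and $\theta'$ up to an additive constant.
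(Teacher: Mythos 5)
Your computation of $g_{\theta,\alpha}$ and the overall architecture of your argument --- derive the $g$-function from \eqref{eqgalphatheta}, invoke Corollary \ref{RuelleCor} for uniqueness of the $g$-measure, and use Lemma \ref{gmeassupport}(ii) to pass from equality of equilibrium states to equality of $g$-functions --- coincide exactly with the paper's proof. The telescoping that collapses the exponent to $\theta(e_0(y))$ and yields
\[g_{\theta,\alpha}(y)=\frac{e^{\theta(e_0(y))}}{\sum_{i=0}^{\lbetar}e^{\theta(i)}}\]
is line-for-line the computation in the paper.

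Where you depart from the paper is in the final step, and there you are right and the paper is not. The paper concludes directly that $g_{\theta,\alpha}=g_{\theta',\alpha'}$ if and only if $\theta=\theta'$. As you observe, the map $\theta\mapsto\bigl(e^{\theta(d)}/\sum_i e^{\theta(i)}\bigr)_d$ is invariant under adding a constant: if $\theta'=\theta+c$ then $g_{\theta,\alpha}=g_{\theta',\alpha'}$ for all $\alpha,\alpha'$, hence $\mu_{\theta,\alpha}=\mu_{\theta',\alpha'}$ by uniqueness of the $g$-measure, even though $\theta\neq\theta'$ when $c\neq 0$. (Consistently, $\varphi_{\theta+c,\alpha}=\varphi_{\theta,\alpha}+c$ because $\sum_{j\ge 0}(\alpha-1)^j/\alpha^{j+1}=1$, and adding a constant to a potential never changes its equilibrium states.) So the ``only if'' direction of the theorem as literally stated fails; the correct conclusion of the argument is that the equilibrium states coincide if and only if $\theta-\theta'$ is constant. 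Your repair --- normalizing so that $\sum_{i=0}^{\lbetar}e^{\theta(i)}=1$, which selects one representative from each additive-constant class --- restores the literal statement and is the natural fix; note that it does not reduce the size of the family, which remains uncountable. In short: your proof is the paper's proof done carefully, and the discrepancy you isolated is a genuine error in the paper's last step, not a gap in your argument.
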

\begin{proof}
We know via Corollary \ref{RuelleCor} that $\varphi_{\theta,\alpha}$ and $\varphi_{\theta',\alpha'}$ have unique equilibrium states $\mu_{\theta,\alpha}$ and $\mu_{\theta',\alpha'}$ respectively, which are $g$-measures of the $g$-functions $g_{\theta,\alpha}$ and $g_{\theta',\alpha'}$ as in equation \eqref{eqgalphatheta}. If $\mu_{\theta,\alpha}=\mu_{\theta',\alpha'}$ then $g_{\theta,\alpha}=g_{\theta',\alpha'}$ by Lemma \ref{gmeassupport}. We now calculate, for any $y=(y_0,y_1,\dots)\in Y$,
\begin{align*}
g_{\theta,\alpha}(y)&=\frac{e^{\varphi_{\theta,\alpha}(y)}e^{(\alpha-1)\varphi_{\theta,\alpha}(y)}}{\lambda_{H_{\theta,\alpha}} e^{(\alpha-1)\varphi_{\theta,\alpha}(Ty)}}\\
& =\frac{1}{\lambda_{H_{\theta,\alpha}}}e^{\alpha\varphi_{\theta,\alpha}(y)} e^{(1-\alpha)\varphi_{\theta,\alpha}(Ty)}\\
& =\frac{1}{\lambda_{H_{\theta,\alpha}}}e^{\Big(\sum_{j=0}^\infty\frac{\theta(e_j(y))(\alpha-1)^j}{\alpha^j}-(\alpha-1)\sum_{j=0}^\infty\frac{\theta(e_{j+1}(y))(\alpha-1)^j}{\alpha^{j+1}}\Big)}\\
& =\frac{1}{\lambda_{H_{\theta,\alpha}}}e^{\Big(\theta(e_0(y))+\sum_{j=1}^\infty\frac{\theta(e_j(y))(\alpha-1)^j}{\alpha^j}-\sum_{j=1}^\infty\frac{\theta(e_{j}(y))(\alpha-1)^{j}}{\alpha^{j}}\Big)}\\
& =\frac{e^{\theta(e_0(y))}}{\sum_{i=0}^{\lfloor{\beta}\rfloor} e^{\theta(i)}}.
\end{align*}
This calculation holds for any $\alpha>1$ and $\theta:\{0,\dots,\lfloor{\beta}\rfloor\}\to \R$, and any $y\in Y$. Thus $g_{\theta,\alpha}=g_{\theta',\alpha'}$ if and only if $\theta=\theta'$. This also proves the converse, for if $\theta=\theta'$ we have $g_{\theta,\alpha}=g_{\theta',\alpha'}$, and because $\mu_{\theta,\alpha}$ and $\mu_{\theta',\alpha'}$ are the unique $g$-measures for $g_{\theta,\alpha}$ and $g_{\theta',\alpha'}$ they must indeed be equal.
\end{proof}

\subsection{The Importance of Explicit Eigenfunctions}

We have now calculated an uncountable number of unique $g$-measures on $(Y,\sigma)$ by constructing explicit potentials in $C(Y)$. Although the measures we construct from Ruelle's Operator Theorem arise from the Schauder-Tychonoff fixed point theorem, the fact that we have calculated explicit eigenfunctions in conjunction with Theorem \ref{eigenequality} allows us to deduce some of the measures' behaviour. In particular we know for any $\varphi_{\theta,\alpha}$ with equilibrium state $\mu_{\theta,\alpha}$, eigenfunction $H_{\theta,\alpha}$ and corresponding measure $\nu_{\theta,\alpha}$, which from now on we denote by $\varphi$, $\mu$, $H$ and $\nu$ respectively, that, due to Corollary \ref{RuelleCor} and 

Theorem \ref{eigenequality}, for any bounded measurable $f$ we can calculate:
\begin{align*}
\mu(f)&=\frac{1}{\nu(H)}\int_YfHd\nu\\
& =\frac{1}{\lambda\nu(H)}\int_Y L_\varphi(fH)d\nu\\
& =\frac{1}{\lambda\nu(H)}\int_Y\sum_{x\in\sigma^{-1}x}e^{\alpha\phi(x)}f(x)d\nu(y).
\end{align*}
In particular for any set $A$ possessing the property, that for any $a\in A$ we have $\sigma^{-1}a=\bigcup_{i=0}^{\lfloor{\beta}\rfloor}(c_i,a_0,\dots)$ for a fixed set $\{c_i\}_{i=0}^{\lfloor{\beta}\rfloor}\subset \{0,\dots,K\}$, we have

\begin{align*}
\mu(A) &=\frac{1}{\lambda\nu(H)}\sum_{i=0}^{\lfloor\beta\rfloor}\int_Y e^{\theta(i)+\sum_{j=0}^{\infty}\theta(e_j(y))\alpha^{-j-1}(\alpha-1)^{j+1}}\I_A(c_i,y)d\nu(y)\\
&=\frac{1}{\lambda\nu(H)}\sum_{i=0}^{\lfloor\beta\rfloor}e^{\theta(i)}\int_Y e^{(\alpha-1)\varphi(y)}\I_A(c_i,y)d\nu(y)\\
& =\frac{1}{\lambda\nu(H)}\sum_{i=0}^{\lfloor\beta\rfloor}e^{\theta(i)}\int_Y H(y)\I_A(c_i,y)d\nu(y)\\
& =\frac{1}{\lambda\nu(H)}\sum_{i=0}^{\lfloor\beta\rfloor}e^{\theta(i)}\mu(A\cap[c_i]),
\end{align*}
where in the last equality we applied $\sigma$ invariance of $\mu$.

\section{$K_\beta$-Invariant Measures}
Theorem \ref{uniquepot} shows that there exists uncountably many distinct $\sigma$-invariant measures on $Y$ with full support. The next question to ask is how do these measures relate to the dynamics of $K_\beta$? On page \pageref{pageY'} we followed the construction in \cite{KarmadeVries}, and created a set $Y'\subset Y$, which we identified with $\Omega\times I_{\beta}$ via the bijection $\psi$. Unfortunately this map is not in general continuous, so pulling back continuous potentials is generally not possible via this map. However, if we can show that $\mu_{\theta,\alpha}(Y')=1$ then $\psi$ is a measurable isomorphism between $(Y,\F, \mu_{\theta,\alpha},\sigma)$ and $(\Omega\times I_{\beta},\B_1\times B_2, \psi_*\mu_{\theta,\alpha},K_\beta)$.
\begin{thm}
	$\mu(Y')=1$ for any $g$-measure $\mu$ with $g\in \mathcal G_c$.
\end{thm}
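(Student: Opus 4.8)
The plan is to reduce the statement to a property of the individual ergodic components of $\mu$ and then exploit full support. First I would observe that $Y'$ is a $\sigma$-invariant Borel set: since prepending or deleting a single coordinate cannot change whether a sequence contains infinitely many switch symbols $s_1,\dots,s_{\lfloor\beta\rfloor}$, one has $\sigma^{-1}(Y\setminus Y')=Y\setminus Y'$, and $Y'$ is clearly Borel (it is a tail-type event). Consequently, writing the ergodic decomposition $\mu=\int \mu_\omega\,d\tau(\omega)$ with each $\mu_\omega$ an ergodic $\sigma$-invariant probability measure, invariance of $Y'$ gives $\mu(Y')=\int \mu_\omega(Y')\,d\tau(\omega)$, and for each ergodic $\mu_\omega$ the value $\mu_\omega(Y')$ lies in $\{0,1\}$. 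Thus it suffices to prove $\mu_\omega(Y')=1$ for $\tau$-a.e.\ $\omega$.

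The key point is that each ergodic component is again a $g$-measure. Indeed, by Ledrappier's Theorem \ref{Ledrappier} the measure $\mu$ is an equilibrium state for $\log g\in C(Y)$ (here $g>0$, so $\log g$ is continuous). Since $(Y,\sigma)$ is an expansive system, the functional $\nu\mapsto h_\nu(\sigma)+\nu(\log g)$ is affine over the ergodic decomposition, so from $h_\mu(\sigma)+\mu(\log g)=\mathcal P_\sigma(\log g)=\int\big(h_{\mu_\omega}(\sigma)+\mu_\omega(\log g)\big)\,d\tau(\omega)$ together with $h_{\mu_\omega}(\sigma)+\mu_\omega(\log g)\le \mathcal P_\sigma(\log g)$ for every $\omega$, one concludes that $\tau$-a.e.\ $\mu_\omega$ attains the pressure, i.e.\ is itself an equilibrium state for $\log g$. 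Applying Ledrappier's Theorem again in the reverse direction shows that $\tau$-a.e.\ $\mu_\omega$ is a $g$-measure.

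Now I would invoke full support: by Lemma \ref{gmeassupport}(i) every $g$-measure has full support, so for $\tau$-a.e.\ $\omega$ the cylinder $[s_1]$ (nonempty, since $s_1$ is a genuine state of the irreducible matrix $A$) satisfies $\mu_\omega([s_1])>0$. By the Birkhoff ergodic theorem applied to the ergodic measure $\mu_\omega$, for $\mu_\omega$-a.e.\ $y$ the average $\tfrac1n\sum_{k=0}^{n-1}\one_{[s_1]}(\sigma^k y)$ converges to $\mu_\omega([s_1])>0$; in particular $y_k=s_1$ for infinitely many $k$, so $y\in Y'$. Hence $\mu_\omega(Y')=1$ for $\tau$-a.e.\ $\omega$, and integrating over the decomposition gives $\mu(Y')=1$.

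The main obstacle — and the only step requiring genuine care — is the second paragraph: showing that the ergodic components of a $g$-measure are again $g$-measures. This rests on the affinity of the map $\nu\mapsto h_\nu(\sigma)+\nu(\log g)$ over the ergodic decomposition (a standard consequence of the affinity of the Kolmogorov--Sinai entropy, valid here because the subshift of finite type is expansive) and on the continuity and integrability of $\log g$, for which one uses $g>0$. Once this is granted the remainder is a routine application of full support and the ergodic theorem; note in particular that no uniqueness, mixing, or finite variation of $g$ is used, so the argument covers arbitrary, possibly non-ergodic, continuous $g$-measures.
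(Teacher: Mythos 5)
Your proof is correct, but it takes a genuinely different route from the paper's. The paper argues directly on $\mu$: it invokes Lemma \ref{gmeassupport}(i) for full support, then Theorem \ref{RuelleMixing} to assert that $\mu$ is strongly mixing (hence ergodic), and finishes exactly as you do, with Birkhoff's ergodic theorem forcing a.e.\ point to visit a switch cylinder $[s_i]$ infinitely often. You instead avoid any ergodicity claim for $\mu$ itself: you note that $Y'$ is a shift-invariant tail set, pass to the ergodic decomposition, show via Ledrappier's Theorem \ref{Ledrappier} and the affinity of $\nu\mapsto h_\nu(\sigma)+\nu(\log g)$ over the decomposition that $\tau$-a.e.\ ergodic component is again a $g$-measure, and then run the full-support-plus-Birkhoff argument componentwise. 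This difference matters: Theorem \ref{RuelleMixing} as stated requires $\log g$ to have summable (``finite'') variation, whereas the theorem being proved claims the conclusion for \emph{any} $g\in\G_c$; for merely continuous $g$ a $g$-measure need not be unique or ergodic (convex combinations of distinct $g$-measures for the same $g$ are again $g$-measures), so the paper's appeal to mixing is, strictly speaking, only justified for the summable-variation $g$'s actually used later (the $g_{\theta,\alpha}$ and the Markov $g_P$). Your decomposition argument proves the statement at its stated level of generality; the price is the extra (standard, but nontrivial) step that ergodic components of an equilibrium state are equilibrium states, which the paper's shorter proof never needs.
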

\begin{proof}
We first note by Theorem \ref{gmeassupport} that $\mu$ is strictly positive on open sets, and also by Theorem \ref{RuelleMixing} that $\mu$ is strongly mixing on $(Y,\sigma)$ which means that it is also ergodic. For any switch symbol $s_i$ set 
\[Y_{s_i}=\{(y_n)\in Y:y_n=s_i~\text{for infinitely many n}\}.\]
Clearly $Y_{s_i}\subset Y'$.
As cylinder sets are open we have $\mu(\I_{[s_i]})>0$, which by Birkhoff's Ergodic Theorem means that for $\mu$ a.e. $y\in Y$
\[\lim_{n\to\infty}\frac{1}{n}\sum_{j=0}^{n-1}\I_{[s_i]}\circ\sigma^j(y)>0,\]
but this is only possible if for $\mu$ a.e. $y$ we have $\I_{[s_i]}\circ\sigma^j(y)=1$ for infinitely many $j\in \N$, which means that $y_j=s_i$ for infinitely many $j\in \N$, which means $y\in Y_{s_i}$ for $\mu$ a.e. $y$. Thus $\mu(Y_{s_i})=1$, so $\mu(Y')=1$.
\end{proof}
As each $\mu_{\theta,\alpha}$ is a $g$-measure for $g\in \mathcal G_c$ it follows that there exists uncountably many distinct $K_\beta$ invariant probability measures on $(\Omega\times I_{\beta},\B_1\times B_2)$. In light of Theorem \ref{pressureiso} and Corollary \ref{RuelleCor} we can construct a $K_\beta$ invariant probability measure with arbitrary concentrated pressure on $\psi(Y')$. Furthermore each of these measures is strong mixing and has a Bernoulli natural extension.

\subsection{Novelty of the equilibrium states.}

Finally, we would like to verify that for any fixed $\beta\in B$ the measures $\psi_*\mu_{\theta,\alpha}$ are not just the $K_\beta$-invariant measures $m_{p}\times\mu_\beta$ that were already discovered in \cite{KarmaInv}. It was shown in \cite{KarmaInv} that for any choice of $p\in [0,1]$, the dynamical system $(\Omega\times I_\beta, \B_1\times \B_2,m_p\times\mu_\beta,K_\beta)$ is isomorphic to $(Y,\F,\mu_{P^{\beta,p}},\sigma)$, where $\mu_{P^{\beta,p}}$ is the Markov measure associated to the transition matrix $P^{\beta,p}$, defined by

\[P^{\beta,p}_{i,j}=\begin{cases}
	\lambda(C_i\cap T_\beta^{-1}C_j)/\lambda(C_i) & \text{if}~ i\in\cup_{k=0}^{b_1}M_k\\
	p & \text{if}~i\in\{0,\dots,K\}\backslash\cup_{k=0}^{b_1}M_k~\text{and}~j=0\\
	1-p & \text{if}~i\in\{0,\dots,K\}\backslash\cup_{k=0}^{b_1}M_k~\text{and}~j=K\\
	0 & \text{if}~~i\in\{0,\dots,K\}\backslash\cup_{k=0}^{b_1}M_k~\text{and}~j\notin\{0,K\}.
\end{cases}\]
and the stationary distribution $\pi^{\beta,p}$, which is strictly positive and satisfies $\pi^{\beta,p} P^{\beta,p}=\pi^{\beta,p}$. Proposition \ref{MarkG} thus gives that $\mu_{P^{\beta,p}}$ is a $g$-measure for the function $g_{P^{\beta,p}}(y)=\frac{\pi_{y_0}P_{y_0,y_1}}{\pi_{y_1}}$. As $(\Omega\times I_{\beta},\B_1\times B_2, \psi_*\mu_{\theta,\alpha},K_\beta)$ is measurably isomorphic to $(Y,\F,\mu_{\theta,\alpha},\sigma)$ for all choices of $\theta$ and $\alpha$, Lemma \ref{gmeassupport} (ii) implies that $\mu_{P^{\beta,p}}$ is equal to some $\mu_{\theta,\alpha}$ if and only if $g_{P^{\beta,p}}=g_{\theta,\alpha}$. Suppose that for some $\theta:\{0,\dots,\lbetar\}\to \R$ there exists a $m_p\times \mu_\beta$ such that $m_p\times \mu_\beta=\psi_*\mu_{\theta,\alpha}$. For convenience set $P=P^{\beta,p}$ and $\pi=\pi^{\beta,p}$. From the proof of Theorem \ref{uniquepot} this would then require, for all $y\in Y$,

\begin{equation}\label{uniqueequality}
	\frac{e^{\theta(e_0(y))}}{\sum_{i=0}^{\lfloor{\beta}\rfloor} e^{\theta(i)}}=\frac{\pi_{y_0}P_{y_0,y_1}}{\pi_{y_1}}.
\end{equation}

We now note that that Proposition \ref{partitionC} implies the following about the transition matrix $P$:

\begin{equation*}
	P_{i,0}=\begin{cases} \frac{1}{\beta}&\text{if}~i=0\\
	p&\text{if}~i\in\{0,\dots,K\}\backslash\bigcup_{i=0}^{\lbetar}M_{k}\\
	0&\text{otherwise}
	\end{cases},\quad
P_{i,K}=\begin{cases} \frac{1}{\beta}&\text{if}~i=K\\
	1-p&\text{if}~i\in\{0,\dots,K\}\backslash\bigcup_{i=0}^{\lbetar}M_{k}\\
	0&\text{otherwise}
\end{cases}.
\end{equation*} 
Since the elements of $\{0,\dots,K\}\backslash\bigcup_{i=0}^{\lbetar}M_{k}$ correspond to an entrance in the switch regions, we shall denote this set by $\{s_1,\cdots,s_{\lfloor \beta\rfloor}\}$.
As $\pi P=\pi$ this allows us to calculate the following equalities:

\begin{equation}\label{stationaryswitch}	
	\sum_{i=1}^{\lbetar}s_i=\frac{\beta-1}{p\beta}\pi_0=\frac{\beta-1}{(1-p)\beta}\pi_K
\end{equation}
The definition of $e_0$ along with our assumption \eqref{uniqueequality} means that for any $j\in\{1,\dots,\lbetar\}$ and $y\in Y$ with $y_0=s_j$ and $y_1=K$
\begin{equation}\label{eqnovelty1}
	\frac{e^{\theta(j-1)}}{\sum_{i=0}^{\lbetar}e^{\theta(i)}}=\frac{e^{\theta(e_0(s_j,K,y_2,\dots))}}{\sum_{i=0}^{\lbetar}e^{\theta(i)}}=\frac{(1-p)\pi_{s_j}}{\pi_K},
\end{equation}
and for any $y\in Y$ with $y_0=s_{\lbetar}$ and $y_1=0$,

\begin{equation}\label{eqnovelty2}
	\frac{e^{\theta(\lbetar)}}{\sum_{i=0}^{\lbetar}e^{\theta(i)}}=\frac{e^{\theta(e_0(s_{\lbetar},0,y_2,\dots))}}{\sum_{i=0}^{\lbetar}e^{\theta(i)}}=\frac{p\pi_{s_{\lbetar}}}{\pi_0}.
\end{equation}
Finally, we combine equations \eqref{stationaryswitch}, \eqref{eqnovelty1} and \eqref{eqnovelty2} to calculate:

\begin{align*}
	1&= \sum_{j=0}^{\lbetar}\frac{e^{\theta(j)}}{\sum_{i=0}^{\lbetar}e^{\theta(i)}}=\frac{(1-p)}{\pi_K}\sum_{j=1}^{\lbetar}\pi_{s_j}+\frac{p\pi_{s_{\lbetar}}}{\pi_0}\\
	& = \frac{\beta-1}{\beta}+\frac{p\pi_{s_{\lbetar}}}{\pi_0} = \frac{\beta-1}{\beta}+	\frac{e^{\theta(\lbetar)}}{\sum_{i=0}^{\lbetar}e^{\theta(i)}}.
\end{align*}
Thus if $g_{\theta,\alpha}=g_{P^{\beta,p}}$ for any $p\in (0,1)$, we must have that 
\[\frac{1}{\beta}=\frac{e^{\theta(\lbetar)}}{\sum_{i=0}^{\lbetar}e^{\theta(i)}},\]
but there are infinitely many $\theta$ for which this relationship does not hold, from which we can conclude that there are infinitely many measures $\psi_*\mu_{\theta,\alpha}$ which are not equal to any of the previously discovered $K_\beta$-invariant measures of the form $m_p\times \mu_\beta$.
\bibliography{DajaniPower}
\bibliographystyle{plain} 
 \end{document}